\numberwithin{equation}{section}
\newcommand{\RR}{{\rm\bf R}}
\newcommand{\ZZ}{{\rm\bf Z}}
\newcommand{\NN}{{\rm\bf N}}
\newcommand{\ee}{{\rm e}}
\newcommand{\pp}{{\rm \bf p}}
\newcommand{\Euc}{{\rm\bf E}}
\newcommand{\Ort}{{\rm\bf O}}
\newcommand{\Gtil}{\widetilde \Gamma}
\newcommand{\ptg}{{\rm\bf J}}
\newcommand{\ptgtil}{{\rm\bf \widetilde J}}
\newcommand{\cal}{\mathcal}
\newcommand{\LL}{{\cal L}}
\newcommand{\Ltil}{{\widetilde{\cal L}}}
\newcommand{\Ld}{{\cal L}^*}
\newcommand{\Ltily}{{\widetilde{\cal L}}_{y_0}}
\newcommand{\MM}{{\cal M}}
\newcommand{\proj}{{\cal P}}
\newcommand{\restr}{{\cal R}}
\newcommand{\hex}{{\cal H}}
\newcommand{\OO}{{\cal O}}
\newcommand{\faixa}{{\rm\bf B}_{y_0}}
\newcommand{\oa}{{\rm\bf a}}
\newcommand{\ob}{{\rm\bf b}}
\newcommand{\oc}{{\rm\bf  c}}
\newcommand{\na}{{\rm\bf \hat a}}
\newcommand{\nb}{{\rm\bf \hat b}}
\newcommand{\plano}{{\rm \bf R}^2}
\newcommand{\seta}{\longrightarrow}
\newtheorem{lema}{Lemma}[section]
\newtheorem{proposicao}{Proposition}[section]
\newtheorem{teorema}{Theorem}[section]
\begin{document}
\title[Periodic Functions, Lattices and Projections]{Periodic Functions, Lattices and Their Projections}
%\title[Projected Lattices and Functions \today]{Projected Lattices and Functions\\
%\today}
\author[I. S. Labouriau,  E. M. Pinho]{
Isabel S. Labouriau, Eliana M. Pinho}
\address{Isabel S. Labouriau --- Centro de Matem\'{a}tica da
   Universidade do Porto\\
   Rua do Campo Alegre, 687\\
   4169-007 Porto\\
Portugal}
\email{islabour@fc.up.pt}
\address{E. M. Pinho --- Centro de Matem\'{a}tica da
   Universidade do Porto\\
   Rua do Campo Alegre, 687\\
   4169-007 Porto\\
Portugal}
\email{empinho@gmail.com}
%\email{eopinho@fc.up.pt}
\subjclass[2010]{Primary 20H15; Secondary 52C22, 58D19}

\keywords{Crystallographic groups. Lattices; tilings in $n$ dimensions; patterns and functions: symmetric, periodic, projected, restricted}

\begin{abstract}
Functions whose symmetries form a crystallographic group in particular have a lattice of periods, and the set of their level curves forms a periodic pattern.
We show how after projecting these functions, one obtains new functions with a lattice of periods that is not the projection of the initial lattice.
We also characterise all the crystallographic groups in three dimensions that are symmetry groups of patterns whose projections have periods in a given  two-dimensional lattice. 
The particular example of patterns that after projection have a hexagonal lattice of periods is discussed in  detail.
\end{abstract}

\maketitle

%\input{Aintroduction}
% 28/ix/17 isabel
 \section{Introduction}
 Patterns in biology or chemistry 
 may arise as a consequence of spatial variations in concentration of one or more substances.
 In many cases the consequences of the variation may  only be observed in projections.
 Examples are shapes in animal coats and  reactions in thin layers of gel.  
 They are modelled by 
 partial differential equations, as for instance in \cite{DionneGolubitsky, GolubitskyStewart}, with a prescribed symmetry group.
  In this context,  we model a periodic pattern by interpreting it as the level sets of a periodic function. 
  The set of all periods is viewed as part of the group of symmetries of the function.
  
The projection of  periodic objects
into a subspace has many applications.
Both the object to be projected and the projection method depend on the context.
In classical crystallography, points in a unit cell of a lattice are projected along certain special directions.
The symmetries of these projections are described in the International Tables of Crystallography \cite[Section 2.2.14]{int-tab-a},
where it says
``Even though the projection of a finite object along any direction may be useful, the projection of a periodic object such as a crystal structure is only sensible along a rational lattice direction (lattice row). Projection along a nonrational direction results in a constant density in at least one direction.''

In contrast, there is no intrinsic choice of direction of projection when observing a three dimensional pattern.

Looking at periodic patterns is not the same as looking at the lattice of their periods, specially when a projection is involved. 
The main goal of this article is to establish this fact and quantify it.
By a lattice we mean a subset of $\RR^{n+1}$ that is a $\ZZ$-module with $n +1$ linearly independent  generators.
When we project a lattice we may obtain a set with too many generators.
When we project periodic functions we may obtain functions with too few periods.
For planar lattices and wallpaper patterns this follows from the results of \cite{LabouriauPinho,LabouriauPinho2}, here we treat the general case.

We start by looking at the projection into $\RR^n$ of a lattice in $\RR^{n+1}$.
After establishing some notation in Section~\ref{secLattices}, we 
discuss in Section~\ref{seccao-periodos-redes} some properties of the projection for general lattices, and we show how these properties are modified if we make the common assumption that the lattice is an integral lattice.

From Section~\ref{SecPatterns} onwards, we project  patterns  in $\RR^{n+1}$  into patterns in  $\RR^n$. 
General information on symmetric patterns is given in Section~\ref{SecPatterns}.
Given the  crystallographic  group $\Gamma$ of symmetries of the original pattern,
we describe in 
Section~\ref{seccao-periodos-funcoes} the symmetry group of the projected pattern and the way it depends on the width of the projection band and on the symmetries of the original pattern, using results of \cite{PinhoLabouriau}.
In particular we obtain all the periods of the projection and compare them to the periods of the restriction of the pattern and to the projection of the lattice.
This particular group of examples is interesting because it has been proposed as an explanation for certain  special patterns,  called black-eye patterns, as discussed in \cite{CLO,Gomes}.

 While Section~\ref{seccao-periodos-funcoes}  contains a discussion of how the periodicity properties of the projected pattern  varies with the projection width, for different types of symmetry groups,
Section~\ref{secProjectGeneral} addresses the converse question: which crystallographic groups in three dimensions are the symmetry groups of functions that after projection over a given band have a given lattice of periods?
We provide a complete classification, and use this to obtain, in Section~\ref{SecProjectionToHex}, information on the symmetries of functions that after projection have periods in a hexagonal lattice.

We determine in  Section~\ref{SecProjectionToHex} which are the spaces of symmetric functions whose projections have a hexagonal lattice of periods: they are spaces of functions with periods on a list of fourteen possible lattices.
Of these, only four types yield a hexagonally periodic projection for all widths of the projecting band.
Three of these have been reported in \cite{Oliveira}.
 Patterns for the fourth type are provided in Section~\ref{SecDiscussion}, along with a discussion of these examples.
The patterns illustrate well the fact that the symmetries of the lattice of periods are not necessarily symmetries of patterns with these periods.

%\input{A1Preliminares}
%8/X/09 isabel
%6/v/15 isabel
%5/viii/15 isabel
%12/viii/15 isabel

 \section{Lattices and groups}\label{secLattices}
In this section we establish some of the terminology for the remainder of the article.
We use the notation
$(x,y) \in \RR^{n+1}, \mbox{ with } x \in \RR^n \mbox{ and }
y \in \RR.$
The reader is referred to Armstrong~\cite[chapters
24, 25 and 26]{Armstrong}, for results on Euclidean and
plane crystallographic groups, and to Senechal~\cite[chapter
2]{Senechal} and Miller~\cite[chapter 2]{Miller} for results
on lattices and crystallographic groups. A
detailed description can also be found in Pinho~\cite[chapter
2]{Pinho}. 

\subsection{the Euclidean group}

The $(n+1)$-dimensional {\em Euclidean group} is the
semi-direct product
$\Euc(n+1) \cong \RR^{n+1}
\ltimes \Ort(n+1)$ with elements
$\gamma=(v,\delta)$, where $v \in \RR^{n+1}$ and $\delta \in
\Ort(n+1)$.
The group operation is $(v_1,\delta_1) \cdot (v_2,\delta_2) =
(v_1 + \delta_1 v_2, \delta_1 \delta_2)$,
for $(v_1,\delta_1)$, $(v_2,\delta_2) \in \Euc(n+1)$ and
the {\em action} of $(v,\delta) \in
\Euc(n+1)$ on $(x,y) \in \RR^{n+1}$ is given by
$(v,\delta)\cdot (x,y)=v+\delta(x,y)$.

\subsection{lattices}

Using the definition of Senechal~\cite[page 37]{Senechal},
a subset $\LL \subset \RR^{n+1}$ is a {\em
lattice}\index{lattice} if it is generated over the integers
by $n+1$ linearly independent elements
$l_1, \ldots , l_{n+1} \in
\RR^{n+1}$, which we write:
$$\LL = \{ l_1, \ldots , l_{n+1} \}_\ZZ
= \left\{ \sum_{i=1}^{n+1}m_i l_i , m_i
\in \ZZ \right\}.$$ 

Any set of vectors $l_1, \ldots , l_{n+1}$ that generate
$\LL$ over the integers defines a $(n+1)$-dimensional parallelepiped
called the {\em fundamental cell}\index{fundamental cell} of
the lattice
and its volume $\rho$ is an invariant of
the lattice (see Senechal~\cite[page 38]{Senechal})

For any $l \in \LL$ there is some $m \in \ZZ$ such that $\frac{1}{m}l$
is the smallest element of $\LL$ colinear with $l$. Then
$\frac{1}{m}l$ is a generator of $\LL$, {\sl i.e.}, there are elements
$g_1, \ldots ,g_n \in \LL$ such that $\LL = \{\frac{1}{m}l , g_1,
\ldots ,g_n \}_\ZZ $.

The {\em
symmetry group} $\Gamma$ of a lattice
$\LL\subset\RR^{n+1}$ is the largest subgroup of $\Euc(n+1)$ that leaves
$\LL$ invariant, i.e. $\gamma\in\Gamma$ if and only if
$\gamma\cdot\LL=\LL$. 
If $(v,\alpha)\in\Gamma$ for some $\alpha\in \Ort(n+1)$ then
$(v,\alpha) \cdot (0,0)=v\in\LL$. 
Since $-v$ is also in $\LL$ then $(-v, I\!d_{n+1})\cdot(v,\alpha)=(0,\alpha)\in 
\Gamma$ and $\alpha \in {\rm\bf H}$, the largest subgroup of $\Ort(n+1)$ 
that leaves $\LL$ invariant, called the {\em holohedry} of $\LL$. For the symmetry group of a lattice $\LL$, we have $\Gamma=\LL \wedge {\rm\bf H}$.

The {\em dual lattice} $\Ld$ of $\LL$ is defined as:
$$\Ld = \{ k \in \RR^{n+1} : \langle k,l_i\rangle \in \ZZ, \;
i=1,\ldots,n+1 \}$$
where $\langle \cdot , \cdot\rangle$ is the usual inner product in
$\RR^{n+1}$.
It may be written as 
$\Ld = \{ l^*_1, \ldots ,l^*_{n+1} \}_\ZZ,$
where $l^*_i \in \RR^{n+1}$ and $\langle l^*_i,l_j\rangle=\delta_{ij}$ for
all $i,j \in \{1, \ldots , n+1 \}$.

\subsection{crystallographic groups}

A subgroup
$\Gamma \leq \Euc(n+1)$ is a {\em crystallographic group}
with {\em lattice} $\LL$ if the orbit, on
$\RR^{n+1}$, of the origin under the action of its {\em subgroup
of translations}
$\{v : (v, I\!d_{n+1}) \in \Gamma\}$, is a
lattice $\LL\in\RR^{n+1}$ (or a  $\ZZ$-module).
We also use the symbol $\LL$ for the
subgroup of translations of $\Gamma$, since it is isomorphic
to the group $(\LL, +)$. 

The projection $(v,\delta)  \longmapsto  \delta$, of $\Gamma$
into $\Ort(n+1)$, has kernel $\LL$ and image
$\ptg=\{ \delta : (v,\delta) \in \Gamma \mbox{ for some }
v\in \RR^{n+1} \}$,
 isomorphic to the quotient $\Gamma / \LL$.
The group $\ptg$ is called
the {\em point group} of $\LL$
and is a subgroup of the holohedry of
$\LL$. 
Thus, $\ptg \LL = \{\delta l : \delta \in \ptg , l \in \LL
\}=\LL$.
A caveat here: Armstrong and Senechal call this a point group. Miller and crystallographers use point group for $\Gamma\cap \Ort(n+1)$.

We will abuse terminology and also refer to 
the projection into $\Ort(n)$ of any subgroup of $\Euc(n)$ whose translations form a $\ZZ$-module with $m$ generators, $m\leq n$ as a  {\em point group}, even if the original group is not crystallographic.

The set of all the elements in $\Gamma$ with a given {\em orthogonal
component} $\delta \in \ptg$ is the coset $\LL
\cdot (v,\delta)=\{(l+v,\delta) : l\in \LL\}$ for any $v\in 
\RR^{n+1}$ such that $(v, \delta) \in \Gamma$.
We will denote $v+\LL$ the
{\em non-orthogonal component} of $(v,\delta)\in \Gamma$
defined up to elements of
$\LL$.

\subsection{special symmetries}
For $\alpha \in \Ort(n)$, we use the following notation for some  elements of $\Ort
(n+1)$:
$$\sigma=\left(
\begin{array}{cc}I\!d_n&0\\0&\
-1\end{array}\right),
\quad \alpha_+=\left(
\begin{array}{cc}\alpha&0\\0&\ 1\end{array}\right)
\quad \mbox{and} \quad \alpha_-=\sigma \alpha_+ =\left(
\begin{array}{cc}\alpha&0\\0&\ -1\end{array}\right)$$
where $I\!d_n$ is the $n\times n$ identity matrix.

%\input{Rede}
%21/i/08
%8/X/09
%28/X/09
%5/viii/15 isabel
% 12/viii/15 isabel

\section{periods of projected
lattices\label{seccao-periodos-redes}}

We start by looking at the projection of the portion of a lattice lying  between two parallel
affine subspaces. 
 Let 
 $\proj: \RR^{n+1}\seta\RR^{n}$ be the projection $\proj(x,y)=x$ and let 
 $\faixa$ be the strip
  $$
\faixa=\left\{
(x,y)\in\RR^{n+1} :   0\le y \le y_0 \right\}.
$$
The projection of $\LL\cap\faixa$, given by
 $$
\proj(\LL\cap\faixa)=\left\{
x\in\RR^n : \exists y,  0\le y \le y_0, \mbox{ such that } 
(x,y)\in\LL
\right\}
$$
is not necessarily a lattice. 
We want to describe the set of its periods, given by
$$
\left\{ \pp\in\RR^n: \ x+\pp\in \proj(\LL\cap\faixa)\ \forall x\in \proj(\LL\cap\faixa)\right\}.
$$
Clearly, if $(\pp,0)\in\LL$, then $\pp$ is a period of $\proj(\LL\cap\faixa)$.

First we obtain conditions for periodicity involving special forms for the generators of $\LL$, 
that we call  {\em adapted generators}.

\begin{lema}\label{lemaGeradoresAdaptados}
Let $\LL$ be a lattice in $\RR^{n+1}$, with $(0,b)\in\LL$, $b\ne 0$, minimal in its direction.
Given $(\alpha,\beta)\in\LL$, $\alpha\ne 0$, there is a set of generators for $\LL$ of the form
$\{ (0,b),(a_i,b_i), \  i=1,\ldots,n\}$ with $ma_1=\alpha$ for some $m\in\ZZ$, and such that $0\le b_i<b$ for $i=1,\ldots,n$ and $\{a_i, i=1,\ldots,n\}$ are linearly independent vectors in $\RR^n$.
\end{lema}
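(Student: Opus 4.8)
The plan is to work with the projected group $\proj(\LL)\subset\RR^n$: I will show it is a genuine full-rank lattice in $\RR^n$, construct inside it an adapted basis whose first vector is collinear with $\alpha$, and then lift each basis vector back to $\LL$, reducing its last coordinate modulo $b$. The hypotheses on $(0,b)$ are exactly what make each of these steps work.

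First I would examine the restriction $\proj|_\LL\colon\LL\to\RR^n$, which is a group homomorphism with kernel $\LL\cap(\{0\}\times\RR)$. This kernel is a discrete subgroup of the line $\{0\}\times\RR$, hence of the form $\ZZ\,(0,b')$; since $(0,b)\in\LL$ is minimal in its direction, $\ZZ\,(0,b')=\ZZ\,(0,b)$, so the kernel has rank one and $\proj(\LL)\cong\LL/\ZZ\,(0,b)$ is free abelian of rank $n$. The main obstacle is to show that $\proj(\LL)$ is \emph{discrete}, and therefore a lattice in $\RR^n$. For this I would argue by contradiction: if there were infinitely many distinct elements $\proj(\ell_k)\to 0$, I would choose representatives $\ell_k=(c_k,d_k)\in\LL$ and subtract suitable multiples of $(0,b)$ to arrange $0\le d_k<b$; then with $c_k\to 0$ all $\ell_k$ eventually lie in a fixed bounded box, so discreteness of $\LL$ leaves only finitely many of them, a contradiction. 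This is precisely where the minimality of $(0,b)$ is essential, since it is what permits the reduction of the last coordinate into $[0,b)$.

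Once $\proj(\LL)$ is known to be a lattice, I would apply the colinearity and primitivity statement recalled in Section~\ref{secLattices} to the element $\alpha=\proj(\alpha,\beta)\in\proj(\LL)$: there is $m\in\ZZ$ for which $a_1=\frac1m\alpha$ is the smallest element of $\proj(\LL)$ colinear with $\alpha$, and such an $a_1$ is a generator of $\proj(\LL)$. Hence $a_1$ extends to a basis $a_1,\dots,a_n$ of $\proj(\LL)$; these are linearly independent in $\RR^n$ and satisfy $ma_1=\alpha$, as required.

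Finally I would lift the basis: for each $i$ pick some $(a_i,c_i)\in\LL$ and set $(a_i,b_i)=(a_i,c_i)-\lfloor c_i/b\rfloor\,(0,b)\in\LL$, so that $0\le b_i<b$. It then remains to check that $\{(0,b),(a_1,b_1),\dots,(a_n,b_n)\}$ generates $\LL$, which I expect to be routine: given $(x,y)\in\LL$, write $x=\sum_{i=1}^n m_i a_i$ with $m_i\in\ZZ$, subtract $\sum_i m_i(a_i,b_i)$, and observe that the remainder projects to $0$ and so lies in the kernel $\ZZ\,(0,b)$. Linear independence over $\RR$ of these $n+1$ vectors follows at once by projecting any vanishing combination to $\RR^n$ and using the independence of the $a_i$, and then reading off the coefficient of $(0,b)$.
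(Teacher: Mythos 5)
Your proof is correct, and its skeleton matches the paper's: reduce to an $n$-dimensional lattice containing $\alpha$, take the minimal element $a_1$ collinear with $\alpha$ and complete it to a basis of that lattice (the primitivity fact recalled in Section~\ref{secLattices}), then lift the basis to $\LL$ and reduce the last coordinates modulo $b$. Where you genuinely diverge is in how the $n$-dimensional lattice is obtained and how generation is verified. The paper never treats $\proj(\LL)$ as a quotient group: it first applies the Section~\ref{secLattices} fact in dimension $n+1$ to complete the minimal vector $(0,b)$ to a generating set $\{(0,b),(c_i,d_i)\}$ of $\LL$, so that $\LL_1=\{c_i\}_\ZZ$ is a lattice by construction; it then works inside the complementary sublattice $\LL_2=\LL\cap W$, lifts a basis of $\LL_1$ through explicit integer combinations, and checks that the lifts generate $\LL_2$ via the matrix identity $\left(m_{ji}\right)\left(n_{ik}\right)=I\!d$. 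You instead identify $\ker(\proj|_\LL)=\ZZ\,(0,b)$ (this is where minimality really enters for you), prove discreteness of $\proj(\LL)$ by the bounded-box argument, and check generation by observing that the remainder lies in the kernel. Your route is more self-contained — you only need the completion-to-a-basis fact in dimension $n$, not in dimension $n+1$, and your generation check is cleaner than the paper's matrix computation — at the cost of having to prove discreteness, which the paper's choice of generators makes automatic. One small inaccuracy in your commentary: the reduction of last coordinates into $[0,b)$ needs only $(0,b)\in\LL$, not its minimality; minimality is what forces the kernel to be exactly $\ZZ\,(0,b)$, which is what your rank count and your final generation step actually use.
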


\begin{proof}
We can write $\LL=\{ (0,b),(c_i,d_i), \  i=1,\ldots,n\}$ where  $\{c_i, i=1,\ldots,n\}$ are linearly independent vectors in $\RR^n$.
Let $W=\{ (c_i,d_i), \  i=1,\ldots,n\}_\RR\subset \RR^{n+1}$ and $\LL_2=\LL\cap W=\{ (c_i,d_i), \  i=1,\ldots,n\}_\ZZ\subset W$.
Then $(\alpha,\beta^\prime)\in \LL_2$ for some $\beta^\prime$.

Let $\LL_1$ be the lattice $\LL_1=\{ c_i, \  i=1,\ldots,n\}_\ZZ\subset \RR^{n}$,  with $\alpha\in\LL_1$.
Take $a_1$ to be the minimal elemento of $\LL_1$ in the direction of $\alpha$, i.e.
$a_1\in\LL_1$ and $ma_1=\alpha$ for some $m\in\ZZ$, $m_*>0$, and hence $a_1=\sum_{k=1}^{n} n_{1k}c_k$.
Then $(a_1,b_1)\in\LL_2$ for $b_1=\sum_{k=1}^{n} n_{1k}d_k$.

Taking generators $\{a_1,\ldots,a_n\}$ for the  lattice $\LL_1\subset \RR^n$, with $a_i=\sum_{k=1}^{n} n_{ik}c_k$, let $b_i\sum_{k=1}^{n} n_{ik}d_k$, then $\{(a_i,b_i), \  i=1,\ldots,n\}_\ZZ\subset \LL_2$.
The proof will be complete if we show that $\LL_2\subset\{(a_i,b_i), \  i=1,\ldots,n\}_\ZZ$.
For this it is sufficient to show that $(c_j,d_j)\in\{(a_i,b_i)\}_\ZZ$.

By construction, $c_j=\sum_{i=1}^n m_{ji}a_i=\sum_{i=1}^n \sum_{j=1}^n m_{ji}n_{ik}c_k$, hence
$\left(m_{ji}\right)\left(n_{ik}\right)=I\!d_{n\times n}$.
Then,
$$\sum_{i=1}^n m_{ji}b_i=\sum_{i=1}^n \sum_{j=1}^n m_{ji}n_{ik}d_k=d_j
\qquad\Rightarrow\qquad
(c_j,d_j)=\sum_{i=1}^n m_{ji}(a_i,b_i) .
$$
Finally, if not all $b_i<b$, we may change the set of generators to $\{(a_i,b_i-s_ib)\}$ where $s_i$ is the largest integer such that $s_ib<b_i$. 
This is still a set of generators, with the required properties.
\end{proof}

\begin{teorema}\label{ThDiophantine}
The projection $\proj(\LL\cap\faixa)$ of a lattice $\LL$  in
$\RR^{n+1}$  has period $\pp\in\RR^n-\{0\}$ 
if and only if either $(\pp,0)\in\LL$ or $(0,b)$ and 
 $(a_1,b_1) \in\LL$
with $m_*a_1=\pp$,  $m_*\in\ZZ$, $b,b_1\in\RR$,  $b>0$, and
in the second case,  for adapted generators $\{(0,b_0),(a_i,b_i),\ i=1,\ldots, n\}$
 as in Lemma~\ref{lemaGeradoresAdaptados},
the following diophantine condition holds:
for any $m_1,\ldots,m_n\in\ZZ$
 \begin{equation}\label{diophantineC}
 \sum_{i=1}^{n} m_ib_i\in[0,y_0]\pmod{b_0}
 \quad\Longrightarrow\quad
m_*b_1+  \sum_{i=1}^{n} m_ib_i\in[0,y_0]\pmod{b_0}
 \end{equation}
\end{teorema}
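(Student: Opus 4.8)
The plan is to describe membership in $\proj(\LL\cap\faixa)$ explicitly in terms of the coordinates of a lattice point with respect to a convenient basis, and then to read off the period condition as a condition on these coordinates. Throughout, write $S=\proj(\LL\cap\faixa)$. Since $(0,0)\in\LL$ and $0\in[0,y_0]$ we have $0\in S$, so if $\pp$ is a period then $\pp=0+\pp\in S\subseteq\proj(\LL)$; in particular there is at least one $\eta\in[0,y_0]$ with $(\pp,\eta)\in\LL$. I would then split the argument according to whether or not $\LL$ contains a nonzero vertical vector $(0,b)$, $b\neq 0$, since this is exactly the hypothesis needed to invoke Lemma~\ref{lemaGeradoresAdaptados}.

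Suppose first that such a vertical vector exists, and let $(0,b_0)$ be minimal in its direction. Applying Lemma~\ref{lemaGeradoresAdaptados} to $(\pp,\eta)\in\LL$ produces adapted generators $\{(0,b_0),(a_i,b_i),\ i=1,\ldots,n\}$ with $m_*a_1=\pp$ and $0\le b_i<b_0$. Because the $a_i$ are linearly independent, each $x\in\proj(\LL)$ is written uniquely as $x=\sum_i k_ia_i$ with $k_i\in\ZZ$, and then $(x,y)\in\LL$ holds exactly when $y\equiv\sum_i k_ib_i\pmod{b_0}$. Hence $x\in S$ if and only if $\sum_i k_ib_i\in[0,y_0]\pmod{b_0}$. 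Adding $\pp=m_*a_1$ replaces $k_1$ by $k_1+m_*$, so $x+\pp\in S$ if and only if $m_*b_1+\sum_i k_ib_i\in[0,y_0]\pmod{b_0}$. Therefore the requirement that $x+\pp\in S$ for all $x\in S$ is, word for word, the diophantine condition \eqref{diophantineC}, which establishes the equivalence with the second alternative in this case. The first alternative, $(\pp,0)\in\LL$, is then the special case $m_*b_1\equiv 0\pmod{b_0}$, for which \eqref{diophantineC} holds trivially.

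It remains to treat the case where $\LL$ has no nonzero vertical vector, and I expect this to be the delicate point, since here $\proj(\LL)$ is not a lattice and adapted generators are unavailable. Now $\proj$ restricted to $\LL$ is injective, so there is a well-defined group homomorphism $\phi\colon\proj(\LL)\seta\RR$ assigning to $x$ the unique $y$ with $(x,y)\in\LL$, and $S=\{x\in\proj(\LL):\phi(x)\in[0,y_0]\}$. Writing $c=\phi(\pp)$, the period condition becomes: for every $h$ in the subgroup $H=\phi(\proj(\LL))\subseteq\RR$, if $h\in[0,y_0]$ then $h+c\in[0,y_0]$. A subgroup of $\RR$ is either dense or of the form $\beta\ZZ$; in the dense case, choosing $h\in H$ just below $y_0$ forces $c\le 0$ and choosing $h$ just above $0$ forces $c\ge 0$, while in the discrete case comparing the largest and smallest admissible multiples of $\beta$ forces $c=0$ likewise. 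Either way $c=\phi(\pp)=0$, which means precisely $(\pp,0)\in\LL$, the first alternative. The converse implication, that $(\pp,0)\in\LL$ always yields a period, is immediate and was already noted before the statement, since translating any $(x,y)\in\LL\cap\faixa$ by $(\pp,0)$ keeps the $y$-coordinate in $[0,y_0]$. The only genuinely non-mechanical ingredient is this dichotomy for subgroups of $\RR$, everything else being a faithful transcription of the period condition into coordinates.
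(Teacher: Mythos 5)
Your proof is correct, and its computational core coincides with the paper's: both begin by lifting the period to some $(\pp,\eta)\in\LL\cap\faixa$, and both finish with the same coordinate argument, namely that with adapted generators from Lemma~\ref{lemaGeradoresAdaptados} a point $x=\sum_i k_ia_i$ lies in $\proj(\LL\cap\faixa)$ exactly when $\sum_i k_ib_i\in[0,y_0]\pmod{b_0}$, so that periodicity becomes condition \eqref{diophantineC} verbatim. Where you genuinely diverge is the structural step that separates the two alternatives. The paper, given a lift with $\eta\neq 0$, applies periodicity once more at the point $n\pp$ (with $n$ maximal such that $n\eta\le y_0$) to obtain two distinct lifts of $(n+1)\pp$, one inside the strip and one above it, whose difference is a nonzero vertical vector $(0,b)\in\LL$; this is constructive and stays inside the lattice. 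You instead split on whether $\LL$ contains a vertical vector at all, and in the negative case use the homomorphism $\phi$ (well defined precisely because $\proj|_{\LL}$ is then injective) together with the dense-or-cyclic dichotomy for subgroups of $\RR$ to force $\phi(\pp)=0$, i.e.\ $(\pp,0)\in\LL$. Logically the two steps are contrapositives of one another, but the proofs are different: yours yields the clean structural statement that without vertical lattice vectors the only possible periods are horizontal lifts, at the cost of invoking the subgroup dichotomy; the paper's construction is shorter and more elementary. Two details in your discrete subcase deserve a sentence each: $H=\{0\}$ must be excluded (otherwise $\LL$ would lie in the hyperplane $y=0$, contradicting that it has $n+1$ linearly independent generators), and comparing the extreme admissible multiples of $\beta$ only gives $0\le c<\beta$, so you also need $c=\phi(\pp)\in H=\beta\ZZ$ to conclude $c=0$; both facts are immediate, but they are the ones doing the work.
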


\begin{proof}
If $\proj(\LL\cap\faixa)$ is $\pp$-periodic,
since $0=\proj((0,0))\in\proj(\LL\cap\faixa)$, then   $\pp\in\proj(\LL\cap\faixa)$.
Therefore there exists $q\in[0,y_0]$ such that $(\pp,q)\in\LL$.
If $q=0$ we are in the first case.

If $q\ne 0$ and if
 $0<q\le y_0$, let $(\tilde{x},\tilde{y})=(n\pp,nq)\in\LL$ where
$n$ is the largest integer such that $nq\le y_0$.
Then both $\tilde{x}$ and $\tilde{x}+\pp\in\proj(\LL\cap\faixa)$ with
$(\tilde{x}+\pp,\tilde{y}+q)\in\LL$, $\tilde{y}+q\ge y_0$ and 
$(\tilde{x}+\pp,\tilde{z})\in\LL$ for some $\tilde{z}$ satisfying
$0\le\tilde{z}\le y_0$.
It follows that $(0,b)=(0,\tilde{y}+q-\tilde{z})\in\LL$.

Thus it remains to show that if $(\pp,0)\notin\LL$
and if $(0,b)$ and $(\pp,q) \in\LL$ then $\pp$ is a period of $\proj(\LL\cap\faixa)$
if and only if the diophantine condition (\ref{diophantineC})  holds for generators 
$(0,b_0)$, and $(a_i,b_i)$, $i=1,\ldots,n$,
adapted to
$(0,b)$ and $(\pp,q)$,
 i.e. with $n_* b_0=b$, and  $m_*(a_1,b_1)=(\pp,q)$, $n_*, m_* \in\ZZ$.

A point $x$ is in $\proj(\LL\cap\faixa)$ if and only if $(x,y)\in\LL$ with 
$x= \sum_{i=1}^{n} m_ia_i$ and 
$y= \sum_{i=1}^{n} m_ib_i\in[0,y_0]\pmod{b_0}$.
Since $\pp=m_*a_1$ then 
$x+\pp= \sum_{i=1}^{n} m_ia_i+m_*a_1$.
Thus $x+\pp\in\proj(\LL\cap\faixa)$ if and only if
$\tilde{y}=m_*b_1+ \sum_{i=1}^{n} m_ib_i\in[0,y_0]\pmod{b_0}$.
\end{proof}

The conditions of Theorem~\ref{ThDiophantine}
are easy to check in some cases, as in the next result.
The period may also be obtained from the symmetries of $\LL$:

\begin{proposicao}\label{propLatt}
Let $\LL$ be a lattice   in $\RR^{n+1}$ with symmetry group $\Gamma\subset \Euc(n+1)$.
The projection $\proj(\LL\cap\faixa)$  has period $\pp\in\RR^n-\{0\}$ if one of the following conditions holds:
\begin{enumerate}
\renewcommand{\theenumi}{\Roman{enumi}}
\item\label{Latt1} 
$(\pp,0)\in\LL$;
\item\label{Latt2} 
$(\pp,q)$ and $(0,b)\in\LL$
for some  $q,b\in\RR$, $0<b\le y_0$;
\item\label{Latt3} 
$\sigma \in \ptg$ 
and $(\pp,q) \in \LL$
for some $q\in\RR$, 
$0<q\le y_0$.
\end{enumerate}
The set $\proj(\LL\cap\faixa)$ is a lattice in Cases (\ref{Latt2}) and (\ref{Latt3}).
\end{proposicao}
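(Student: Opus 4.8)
The plan is to prove the two assertions separately: first that $\pp$ is a period of the set $\proj(\LL\cap\faixa)$ in each of the three cases, and then that this set is a lattice in Cases (\ref{Latt2}) and (\ref{Latt3}). Throughout I use that $x\in\proj(\LL\cap\faixa)$ exactly when $(x,y)\in\LL$ for some $y\in[0,y_0]$, so that to prove $\pp$-periodicity it suffices, for each such $x$, to exhibit a height $y'\in[0,y_0]$ with $(x+\pp,y')\in\LL$.

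For the periodicity, Case (\ref{Latt1}) is immediate, since adding $(\pp,0)\in\LL$ to $(x,y)$ leaves the second coordinate unchanged. In Case (\ref{Latt2}) I would add $(\pp,q)$ to reach $(x+\pp,y+q)\in\LL$ and then correct the height by an integer multiple of $(0,b)$; because $0<b\le y_0$, the progression $y+q+b\ZZ$ meets the interval $[0,y_0]$, whose length $y_0$ is at least $b$, producing the required $y'$. Case (\ref{Latt3}) is where the reflection enters: since $\sigma\in\ptg$ equals the holohedry ${\rm\bf H}$ of $\LL$, we have $\sigma\LL=\LL$, so that $(x,-y)\in\LL$ and $(\pp,-q)\in\LL$. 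Then both $(x+\pp,\,q-y)=(x,-y)+(\pp,q)$ and $(x+\pp,\,y-q)=(x,y)+(\pp,-q)$ lie in $\LL$, and as $y\in[0,y_0]$ and $q\in(0,y_0]$ force $|y-q|\le y_0$, exactly one of these two heights lies in $[0,y_0]$. This settles periodicity in all three cases.

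For the lattice claim I would show that $\proj(\LL\cap\faixa)=\proj(\LL)$ and that $\proj(\LL)$ is itself a lattice. The second point follows from Lemma~\ref{lemaGeradoresAdaptados}: in Case (\ref{Latt2}) we already have $(0,b)\in\LL$, and in Case (\ref{Latt3}) subtracting $(\pp,-q)$ from $(\pp,q)$ yields $(0,2q)\in\LL$; either way $\LL\cap\ker\proj$ is a nontrivial infinite cyclic group, generated by a minimal $(0,b_0)$, and the lemma produces generators $\{(0,b_0),(a_i,b_i)\}_\ZZ$ of $\LL$ with the $a_i\in\RR^n$ linearly independent, so that $\proj(\LL)=\{a_1,\dots,a_n\}_\ZZ$ is a full-rank lattice. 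To obtain $\proj(\LL\cap\faixa)=\proj(\LL)$ I must check that every $x\in\proj(\LL)$ admits a lift $(x,y')\in\LL$ with $y'\in[0,y_0]$. The fibre $T_x=\{t:(x,t)\in\LL\}$ is a coset of $b_0\ZZ$, so this reduces to bounding its smallest nonnegative element by $y_0$. In Case (\ref{Latt2}) this is immediate, since $b_0\le b\le y_0$ forces that element into $[0,b_0)\subseteq[0,y_0]$. In Case (\ref{Latt3}), $\sigma\LL=\LL$ makes $T_x$ symmetric about $0$, so $T_x$ is either $b_0\ZZ$ or $\tfrac{b_0}{2}+b_0\ZZ$ and its smallest nonnegative element is $0$ or $b_0/2$; since $(0,2q)\in\LL$ forces $b_0\le 2q\le 2y_0$, we have $b_0/2\le y_0$, and the lift into the strip always exists.

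I expect the main obstacle to be the lattice claim in Case (\ref{Latt3}). Unlike Case (\ref{Latt2}), there is no vertical period of length at most $y_0$ at hand, so the naive fold-into-the-strip argument can fail. The resolution is to combine the reflection symmetry, which constrains each vertical fibre $T_x$ to be centred at $0$ or at the half-period $b_0/2$, with the estimate $b_0\le 2y_0$ extracted from $(0,2q)\in\LL$; this is exactly what keeps the half-period inside $[0,y_0]$. The secondary subtlety is that one must verify genuine discreteness of $\proj(\LL)$, not merely that it is finitely generated of rank $n$, and this is precisely what the linear independence of the $a_i$ in Lemma~\ref{lemaGeradoresAdaptados} provides.
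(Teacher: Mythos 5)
Your proof is correct, and it differs from the paper's in two substantive ways. The paper routes Cases (\ref{Latt1}) and (\ref{Latt2}) through Theorem~\ref{ThDiophantine}: Case (\ref{Latt1}) is immediate, and in Case (\ref{Latt2}) the hypothesis $0<b\le y_0$ makes the diophantine condition \eqref{diophantineC} hold automatically --- this is the same folding-by-$(0,b)$ idea you use, just packaged through the theorem. The real divergence is Case (\ref{Latt3}): the paper first derives $(0,2q)\in\LL$, takes the minimal vertical element $(0,b)$, reduces to Case (\ref{Latt2}) when $b\le y_0$, and otherwise (where necessarily $b=2q>y_0$) argues by divisibility that the only heights occurring in the strip are $0$ and $q$; your argument instead applies $\sigma$ once to the lattice point and once to the period, producing the two candidates $(x+\pp,q-y)$ and $(x+\pp,y-q)$, of which the one with nonnegative height lies in the strip because $|y-q|\le\max(y,q)\le y_0$. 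This is shorter and avoids the case split entirely (one nitpick: ``exactly one'' should be ``at least one'', since both heights vanish when $y=q$, but this is immaterial). You also prove the final assertion --- that $\proj(\LL\cap\faixa)$ is a lattice in Cases (\ref{Latt2}) and (\ref{Latt3}) --- explicitly, by showing $\proj(\LL\cap\faixa)=\proj(\LL)$ via the fibre analysis ($T_x$ is a coset of $b_0\ZZ$, symmetric about $0$ in Case (\ref{Latt3}), with smallest nonnegative element $0$ or $b_0/2\le q\le y_0$) and invoking Lemma~\ref{lemaGeradoresAdaptados} for full rank and discreteness; the paper's proof asserts this claim but never verifies it. What the paper's route buys is integration into the general diophantine framework, exhibiting the proposition as the cases where condition \eqref{diophantineC} is trivially satisfied; what yours buys is self-containedness, the stronger intermediate statement $\proj(\LL\cap\faixa)=\proj(\LL)$, and an actual proof of the lattice claim.
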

The conditions of  Proposition~\ref{propLatt} are illustrated in Figures~\ref{figura-periodos-1}, \ref{figura-periodos-3} and
\ref{figura-periodos-2}.

\begin{proof}

Case (\ref{Latt1}) follows immediately from  Theorem~\ref{ThDiophantine}.
For Case (\ref{Latt2}), the diophantine condition \eqref{diophantineC} always holds because if $0<b_0\le b\le y_0$ then
for every $x\in\RR$ $\exists \tilde{x}\in [0,y_0]$ such that $x=\tilde{x}\pmod{b_0}$.

For Case (\ref{Latt3}), since $\sigma\in\ptg$ then $\sigma\LL\subset\LL$ and hence for any $(x,y)\in \LL$ we have
$(x,y)-\sigma(x,y)=(0,2y)\in \LL$.
In particular, $(0,2q)\in\LL$, let $(0,b)$, $0<b\le2q$ be the smallest  non-zero element in this direction.

If $b\le y_0$ we are in  Case (\ref{Latt2}).
Otherwise, $b=2q$ and $q\in(y_0/2, y_0]$. 
Let $(x,y)\in\LL\cap\faixa$. 
If $y=0$ then $(x,y)+(\pp,q)=(x+\pp ,q)\in\LL\cap\faixa$ and $x+\pp\in\proj(\LL\cap\faixa)$.
Otherwise, $0<y\le y_0$ and  for some $n\in\ZZ$ we have $2y=n_*b$ for some $n_*\in\ZZ$, hence $y=n_*q\le y_0$, hence $n_*=1$ and $y=q$.
It follows that $(x,y)+(\pp,q)-(0,2q)=(x+\pp,0)\in\LL$ hence $\pp$ is a period.
\end{proof}

Note that in  Case (\ref{Latt3}) of Proposition~\ref{propLatt}, from
$(\pp,q)\in\LL$  we  also get
$(2\pp,0)=(\pp,q)+\sigma(\pp,q)\in\LL$ yielding a larger period.

The conditions of Proposition~\ref{propLatt} are not necessary:
 if $\proj(\LL\cap\faixa)$  has period $\pp\in\RR^n-\{0\}$
and if  $(\pp,q)$ and $(0,b)\in\LL$ with $q>0$ and  $b>y_0$,
this does not entail that $\sigma$ is in the holohedry of
$\LL$, as the following example shows.

Let
$$
\LL=\left\{ (5,4),(0,7)\right\}_\ZZ\qquad y_0=6.$$
For each $n\in\ZZ$ there is $m\in\ZZ$ such that $4n+7m\in[0,6]$ because $4n\equiv 0,1,\ldots,6\pmod{7}$ always has a solution.
Fom this  it follows that 
for each $n\in\ZZ$ there is $m\in\ZZ$ such that
$n(5,4)+m(0,7)\in\faixa$ and therefore
 $\proj(\LL\cap\faixa)$ is 5-periodic.
 On the other hand $(5,-4)=\sigma(5,4)\not\in\LL$, 
 so  $\sigma$ is not in the holohedry of $\LL$.

 \subsection{Integral lattices}\label{subIntegralL}
Lattices where all the elements have integer
squared length are called  {\em integral lattices} and are important
 in the context of quasicrystals and quasiperiodic tilings --- see Senechal~\cite{Senechal} and Janssen {\sl et al}~\cite{JJ} for
 details.

 Given a subset $A\subset\RR^{n+1}$ and $y\in\RR$ let 
$$\restr_y(A)=\left\{ x:\ (x,y)\in A\right\}=
 \proj\left(A\cap\left(\RR^n\times \{y\}\right) \right).$$

Let $y_0=\max\{ y:\ (x,y)\in V(0)\}$,  the height of the {\em Vorono\"\i{} cell}
$$
V(0)=\left\{ (x,y)\in\RR^{n+1}:\ \left| (x,y)\right|\le \left| (x,y)-(p,q)\right| \ \forall (p,q)\in \LL\right\} .
$$
The restriction of the projection $\proj$ to $\LL\cap(B_{y_0}\cup \sigma B_{y_0})$ is called a {\em canonical projection}
when $\restr_0(\LL)=\{0\}$ .

The conditions in Proposition~\ref{propLatt} are also necessary in the more restrictive context of a canonical projection of integral lattices, see Senechal~\cite{Senechal}.
 
If $\LL$ is an integral lattice then the projection of $\LL\cap\faixa$ is non-periodic if 
$\restr_0(\LL)=\{0\}$, 
by Proposition~2.17 in Senechal~\cite{Senechal}.
It follows that 
in Case~\ref{Latt2} of Proposition~\ref{propLatt},  if $\restr_0(\LL)=\{0\}$ 
then $\LL$ is not an integral lattice.
To see this directly, let $(c,d)$ be any element of
$\LL$ and
$(0,a)\in
\LL$. If $\LL$ is an integral lattice then $a^2 \in \ZZ$ and
$\LL\subset \Ld$, see Senechal~\cite[section 2.2]{Senechal}.
Thus, $\langle (0,a),(c,d)\rangle \in \ZZ$ which implies $ad=n \in \ZZ$.
 Therefore, 
$a^2(c,d)-n(0,a)=(a^2c,0) \in \LL$ and it follows that the
restriction of $\LL$ to the subspace $y=0$ is not the origin
alone.
In Case~\ref{Latt3} of Proposition~\ref{propLatt},  we always have $\restr_0(\LL)\ne\{0\}$,
as we remarked before.

Figures~\ref{figura-periodos-1}, \ref{figura-periodos-3} and
\ref{figura-periodos-2} present some examples of projection of lattices in
$\RR^2$, illustrating the  cases in Proposition~\ref{propLatt}. 
The intersection of the lattice with the subspace
$y=0$ allows us to compare Proposition~\ref{propLatt} to results
on integral lattices. Figure~\ref{figura-periodos-4}
explains the periodicity of the projected lattice when
$(0,y_0) \in \LL$.

\begin{figure}[h]
\includegraphics[scale=0.3]{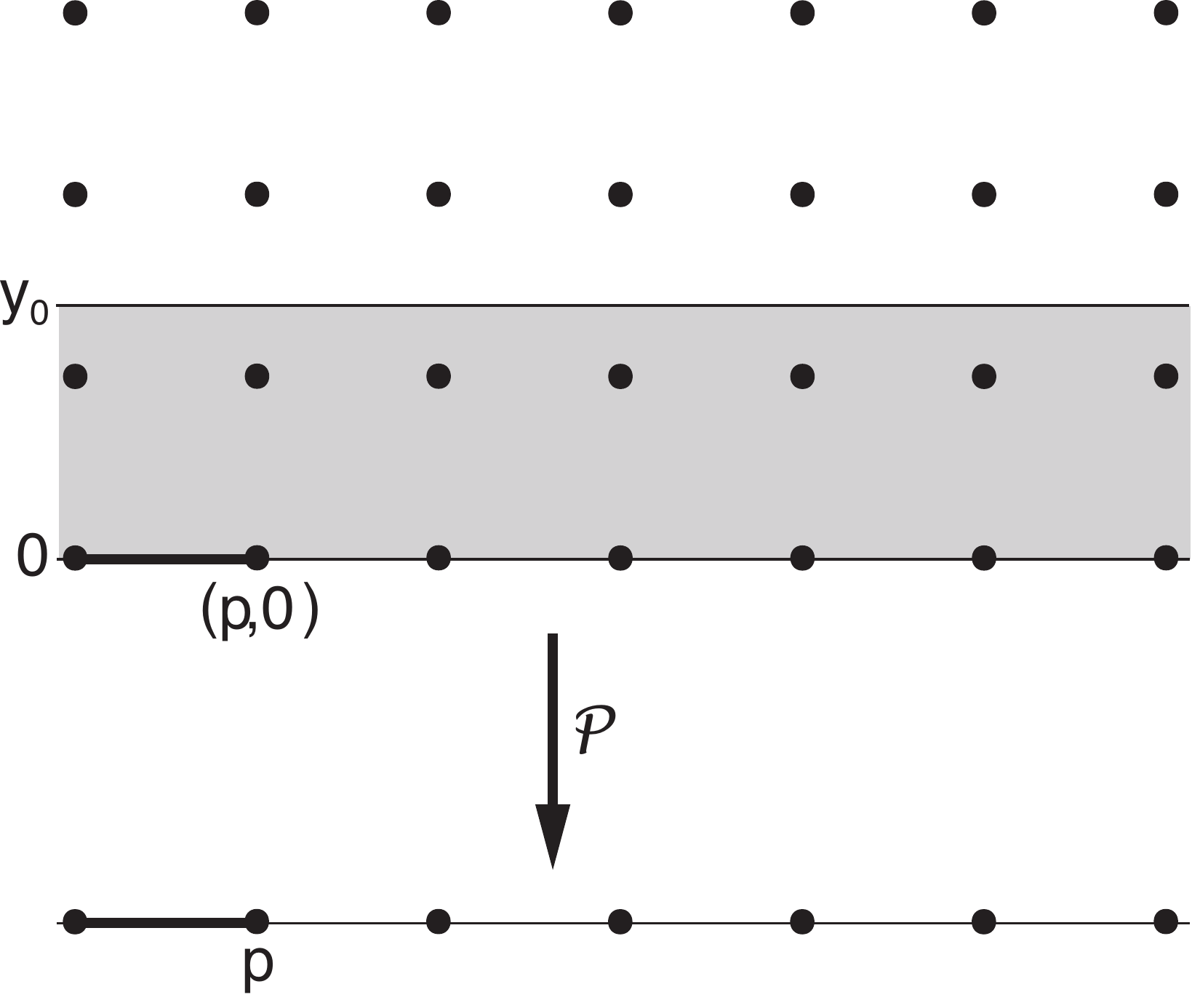}
\caption{Projections of a lattice: If $(\pp,0) \in \LL$ (Case~\ref{Latt1} of
Proposition~\ref{propLatt}) then both the projection
$\proj(\LL\cap\faixa)$ and the restriction to the line $y=0$
have period $\pp$.
}\label{figura-periodos-1}
\end{figure}

\begin{figure}[h]
\includegraphics[scale=0.3]{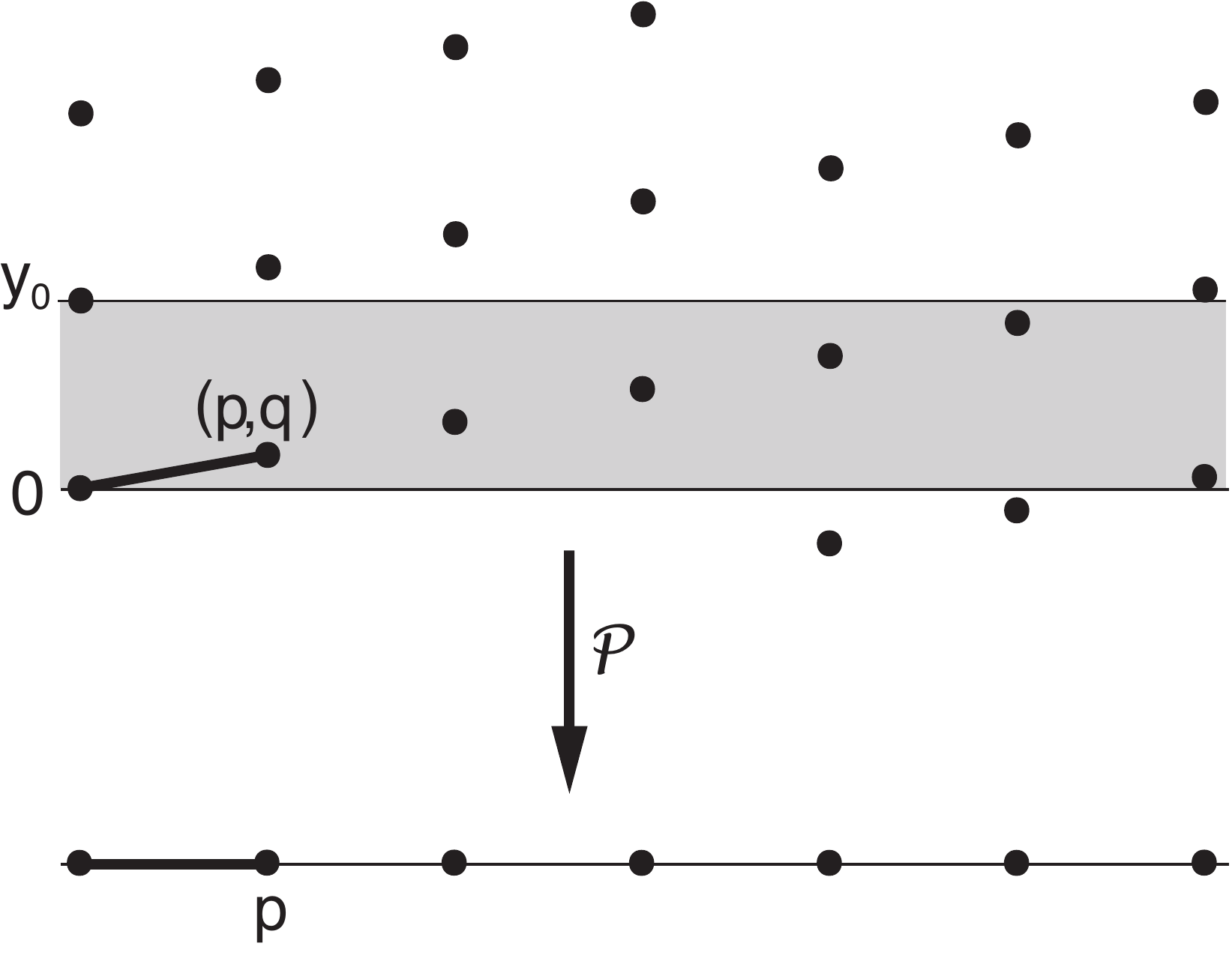}
\caption{Projections of a lattice:
$(0,y_0) \in \LL$ ensures period $\pp$ for the projection width $y_0$  (Case~\ref{Latt2} of
Proposition~\ref{propLatt})
even when the restriction to $y=0$ is
nonperiodic.}\label{figura-periodos-3}
\end{figure}

\begin{figure}[h]
\includegraphics[scale=0.3]{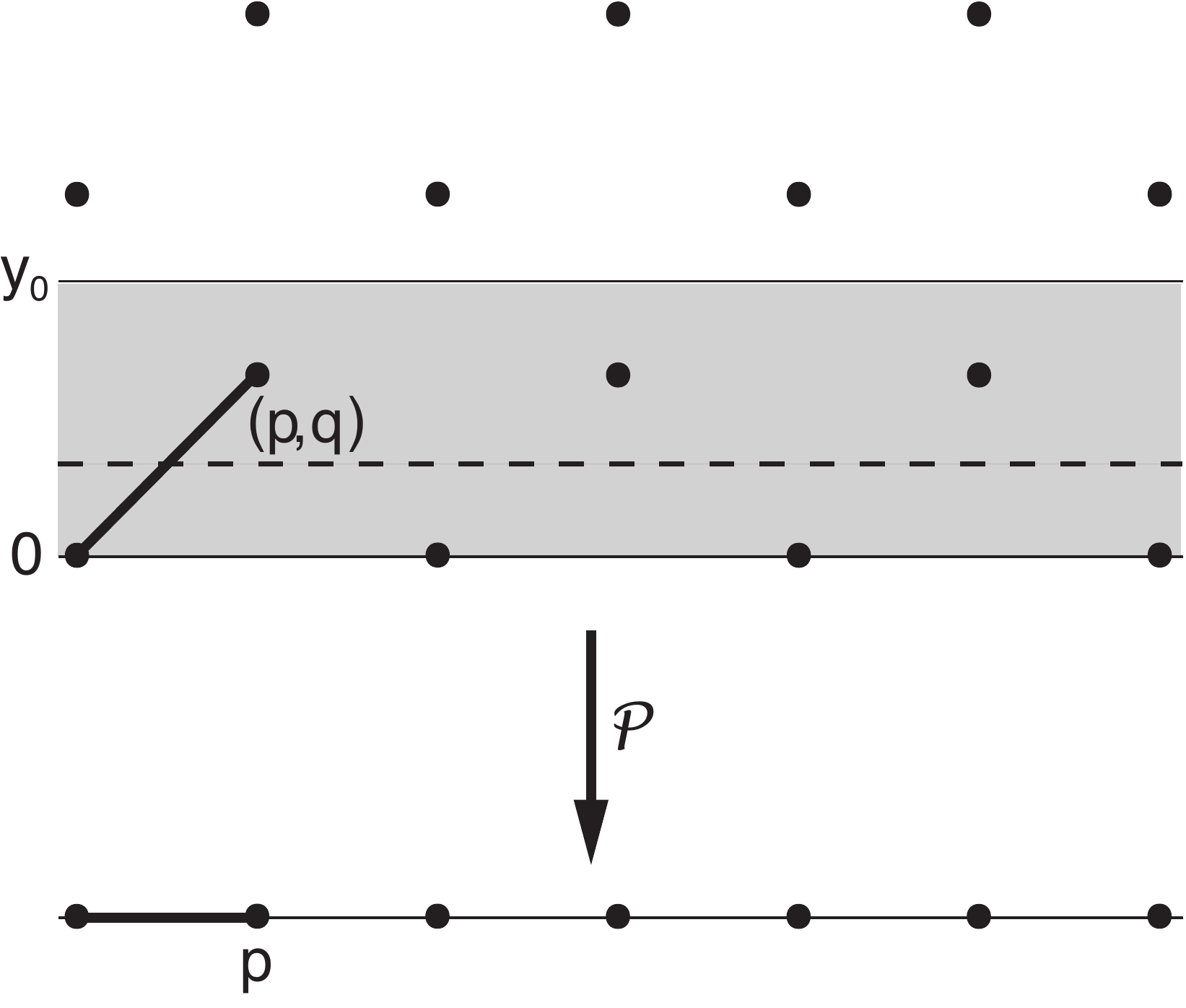}
\caption{Projections of a lattice:
The glide reflection on the dashed line (Case~\ref{Latt3} of
Proposition~\ref{propLatt}) acts
as a translation by $\pp$, after the projection for sufficiently large
 $y_0$.
The restriction to
$y=0$ and  projections of narrower strips have period
$2\pp$.}\label{figura-periodos-2} 
\end{figure}

\begin{figure}[h]
\includegraphics[scale=0.3]{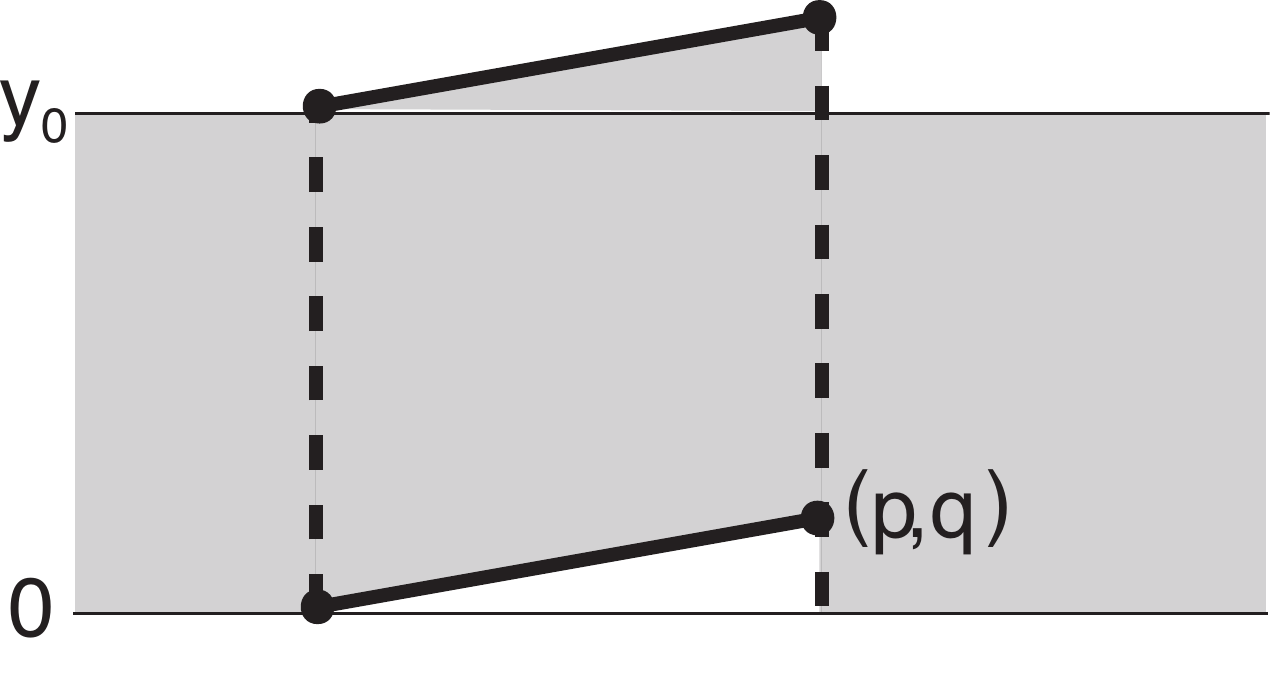}
\caption{If $(0,y_0) \in \LL$ then projecting a strip of
width $y_0$ is equivalent to the projection of the
cell defined by  $(0,y_0)$ and
$(\pp,a)$.}\label{figura-periodos-4}
\end{figure}

 \section{Patterns}\label{SecPatterns}
 We identify a  {\em pattern} to the level set of a function
$f:\RR^{n+1} \seta \RR,$
 periodic along $n+1$ independent directions.

\subsection{$\Gamma$ acting on functions}

The action of $\Gamma$ in $\RR^{n+1}$ induces the scalar
action: $(\gamma \cdot f)(x,y)=f(\gamma^{-1} \cdot (x,y))$ 
for $\gamma \in \Gamma$ and $(x,y) \in  \RR^{n+1}$, see
Melbourne~\cite[section 2.1]{Melbourne}. A function $f$ is 
$\Gamma${\sl -invariant} if
$(\gamma \cdot f)(x,y)=f(x,y)$, for all $\gamma \in
\Gamma$ and all $(x,y)
\in  \RR^{n+1}.$ 

\subsection{function spaces}
We will work in $X_\Gamma$,  the vector space
$$
X_\Gamma=\{f:\RR^{n+1}\seta\RR^n  \quad \Gamma\mbox{-invariant, of class } C^1  \}
$$
where $\Gamma$ is a
$(n+1)$-dimensional crystallographic group with lattice $\LL$ and point group $\ptg$. 
Since $\LL$ is a subgroup of $\Gamma$ then any $f\in X_\Gamma$ is
$\LL$-invariant. 
Thinking of $\LL$ as a subset of  $\RR^{n+1}$ this means that any $f\in X_\Gamma$ is 
$\LL$-{\em periodic}. 
Thus $X_\Gamma$ is  the
generalisation, to any dimension, of functions on the plane
whose level curves form a periodic tiling.

Consider the {\em waves}
$\omega_k(x,y)=\ee ^{2\pi i
\langle k,(x,y)\rangle},$ where $k \in \RR^{n+1}$
and $\langle \cdot , \cdot\rangle$ is the usual inner product in
$\RR^{n+1}$. 
The set of all $k \in \RR^{n+1}$ such that
$\omega_k$ is a
$\LL$-periodic function is
the dual lattice of $\LL$.
 If the  point group $\ptg$ of $\Gamma$ is non-trivial, then the waves $\omega_k(x,y)$, $k\in\Ld$ are not $\Gamma$-invariant, but a Hilbert basis for $X_\Gamma$ may be obtained from them by taking   $I_k(x,y) =\sum_{\delta \in \ptg} 
\omega_{\delta k}(x,y) \omega_{\delta
k}(-v_\delta)$ where $\left(v_\delta,\delta\right)\in\Gamma$. 
Note that this expression does not depend on the choice of $v_\delta$.
The functions $Re(I_k), Im(I_k)$ lie in $X_\Gamma$ for all $k\in \Ld$, 
and every $\Gamma$-invariant function of class $C^1$ has a
Fourier expansion in terms of the functions $I_k$ with  the  Fourier series  of $f$ converging absolutely and uniformly to $f$.

\subsection{the projection operator}

For $y_0>0$, consider the restriction of $f$ to 
the region between the {\em hyperplanes} $y=0$ and $y=y_0$.
The {\em projection operator} $\Pi_{y_0}$ integrates this
restriction of $f$ along the width $y_0$, yielding a
new function with domain $\RR^n$:

$$\Pi_{y_0}(f)(x)=\int_0^{y_0}f(x,y)dy.$$

The region between $y=0$ and $y=y_0$ is called the {\em
projected band} or the {\em projection band}, and $y_0$ is
called the {\em width of projection} or the {\em width of the
projected band}.

Note that  since the Fourier series of $f$ converges  absolutely and uniformly to $f$,
 it follows that the integral  in the projection of $f$ commutes with the summation in its Fourier series.

\subsection{symmetries of projected functions}

If $f \in X_\Gamma$ then the {\em projected function}
 $\Pi_{y_0}(f)$ may be invariant under the
action of some elements of the group $\Euc(n)  \cong
\RR^{n} \ltimes\Ort(n)$. Using a notation similar to the
$(n+1)$-dimensional case, 
$(v_\alpha, \alpha) \in \Euc(n)$ is a symmetry of 
$\Pi_{y_0}(f)$ if
$$(v_\alpha, \alpha) \cdot \Pi_{y_0}(f)(x)
= \Pi_{y_0}(f)(\alpha^{-1}x - v_\alpha)
=\Pi_{y_0}(f)(x) \quad \forall x \in \RR^n.$$

\subsection{restriction\label{seccao-notacao-restricao}}

Let $\Phi_r$ be the operator that restricts the functions to
the hyperplane $y=r$,
$$\Phi_r(f)(x)=f(x,r).$$
The functions $\Phi_r(f)$ may be
invariant under the action of some elements of the group
$\Euc(n)  \cong
\RR^{n} \ltimes\Ort(n)$, as discussed above for the projected
functions.

\subsection{ description of the symmetries of projected functions}

The following result from \cite{PinhoLabouriau} shows how to obtain symmetries  
of the projected functions
$\Pi_{y_0}(f)\in\Pi_{y_0}(X_\Gamma)$ from the symmetry of the functions $f\in X_\Gamma$.

\begin{teorema}[\cite{PinhoLabouriau}]\label{teorema-projeccao}

All functions in $\Pi_{y_0}(X_\Gamma)$ are
invariant under the action of $(v_\alpha,\alpha) \in 
\RR^n \ltimes \Ort(n)$ if and only
if one of the following conditions holds:

\begin{enumerate}
\renewcommand{\theenumi}{\alph{enumi}}

\item\label{teorema-1} $\left((v_\alpha,0),\alpha_+ \right)
\in \Gamma$,

\item\label{teorema-2} $\left((v_\alpha,y_0),\alpha_- \right)
\in \Gamma$,

\item\label{teorema-3} 
$(0,y_0) \in \LL$ and 
$\left((v_\alpha,y_1),\alpha_+ \right) \in \Gamma$, for some
$y_1 \in \RR$ .

\item\label{teorema-4}
 $(0,y_0) \in \LL$ and 
$\left((v_\alpha,y_1),\alpha_- \right) \in \Gamma$, for some
$y_1 \in \RR$ .

\end{enumerate}

\end{teorema}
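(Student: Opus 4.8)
The plan is to translate everything into the Fourier description of $X_\Gamma$, where $\Pi_{y_0}$ acts diagonally on the basis waves, reducing the statement to matching horizontal frequencies and phases. For \emph{sufficiency} I would check each condition by a change of variables. If $\bigl((v_\alpha,0),\alpha_+\bigr)\in\Gamma$ then $f(\alpha x+v_\alpha,y)=f(x,y)$, and since $\alpha_+$ fixes each horizontal hyperplane it commutes with integration in $y$, giving $\Pi_{y_0}(f)(\alpha x+v_\alpha)=\Pi_{y_0}(f)(x)$, which is case~(\ref{teorema-1}). If $\bigl((v_\alpha,y_0),\alpha_-\bigr)\in\Gamma$ then $f(\alpha x+v_\alpha,y_0-y)=f(x,y)$, and the substitution $y\mapsto y_0-y$ maps the band $\faixa$ onto itself and preserves $\int_0^{y_0}dy$, which is case~(\ref{teorema-2}). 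For (\ref{teorema-3}) and (\ref{teorema-4}) the same computation produces an integral over a shifted band $[y_1,y_1+y_0]$ or $[y_1-y_0,y_1]$, which equals $\int_0^{y_0}$ precisely because $(0,y_0)\in\LL$ makes every $f\in X_\Gamma$ periodic of period $y_0$ in $y$.

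For \emph{necessity}, assume $(v_\alpha,\alpha)$ is a symmetry of $\Pi_{y_0}(f)$ for every $f\in X_\Gamma$. Since $\Pi_{y_0}$ commutes with the Fourier summation and the $I_k$, $k\in\Ld$, form a Hilbert basis, the linear map $f\mapsto (v_\alpha,\alpha)\cdot\Pi_{y_0}(f)-\Pi_{y_0}(f)$ vanishes on $X_\Gamma$ if and only if it vanishes on each $\Pi_{y_0}(I_k)$. Writing $k=(\kappa,\lambda)$, a one-line integration gives $\Pi_{y_0}(\omega_k)(x)=c_k\,\omega_\kappa(x)$ with $c_k=\int_0^{y_0}\ee^{2\pi i\lambda y}\,dy$; thus $c_k=y_0$ for $\lambda=0$, while for $\lambda\ne0$ one has $c_k=0$ exactly when $\lambda y_0\in\ZZ$. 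Hence $\Pi_{y_0}(I_k)$ is the finite combination $\sum_{\delta\in\ptg}\omega_{\delta k}(-v_\delta)\,c_{\delta k}\,\omega_{\proj(\delta k)}$ of planar waves, and comparing Fourier coefficients in the identity $(v_\alpha,\alpha)\cdot\Pi_{y_0}(I_k)=\Pi_{y_0}(I_k)$ forces $\alpha$ to permute the surviving frequencies $\{\proj(\delta k):c_{\delta k}\ne0\}$, with a phase relation between the matched weights.

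Choosing $k$ generic enough that the orbit $\ptg k$ projects injectively, I would read off a pairing $\delta\mapsto\delta'$ with $\alpha\,\proj(\delta k)=\proj(\delta' k)$. Since $\proj\,\alpha_+=\alpha\,\proj$, this means $\alpha_+\delta k$ and $\delta'k$ differ by a vertical vector $(0,\mu)$, and the matched phase pins $\mu$ together with $v_\alpha$, producing an element of $\Euc(n+1)$ whose orthogonal part is $\alpha_+$, or $\alpha_-$ if the pairing reverses the sign of $\lambda$, lying in $\Gamma$. To locate its height I would split on whether $(0,y_0)\in\LL$: when $(0,y_0)\notin\LL$ some surviving frequency has $\lambda\ne0$, and the phase constraint forces the vertical component of the lift to be $0$ for $\alpha_+$ (case~(\ref{teorema-1})) or $y_0$ for $\alpha_-$ (case~(\ref{teorema-2})); when $(0,y_0)\in\LL$ only the $\lambda=0$ modes survive $\Pi_{y_0}$, so the height is immaterial and a lift of either type at an arbitrary $y_1$ suffices (cases~(\ref{teorema-3}) and~(\ref{teorema-4})).

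The hard part will be this last extraction: converting the Fourier phase-matching into an honest element of the crystallographic group $\Gamma$, rather than a mere symmetry of the projected frequency module. This requires controlling the degeneracies --- several vertical frequencies $\lambda$ projecting onto one $\kappa$, and the frequencies annihilated by $c_k=0$ --- choosing $k$ so that the projected orbit separates the cosets of $\ptg$, and correctly deciding whether the resulting lift carries $\alpha_+$ or $\alpha_-$ and at which forced height.
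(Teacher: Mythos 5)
The paper itself never proves this theorem: it is imported verbatim from \cite{PinhoLabouriau}, so there is no in-paper argument to compare yours against; your Fourier-analytic route is the natural one and is in the spirit of that reference. Your sufficiency half is complete and correct: the change of variables $y\mapsto y_0-y$ for condition (\ref{teorema-2}), and the shift of the integration interval absorbed by vertical $y_0$-periodicity for conditions (\ref{teorema-3})--(\ref{teorema-4}), are exactly what is needed, as is the observation (via duality, since $(0,y_0)\in\LL$ if and only if $\lambda y_0\in\ZZ$ for every $(\kappa,\lambda)\in\Ld$) that governs which waves survive $\Pi_{y_0}$.

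The gap is in necessity, and you name it yourself without closing it: ``converting the Fourier phase-matching into an honest element of the crystallographic group $\Gamma$'' is the entire content of the hard direction, and your proposal stops where it begins. Three concrete steps are missing. First, from the per-$k$ pairing you must extract a single element of $\Ort(n+1)$: since $|\delta' k|=|k|=|\alpha_+\delta k|$ and $\proj(\delta'k)=\proj(\alpha_+\delta k)$ force $\delta' k\in\{\alpha_+\delta k,\,\alpha_-\delta k\}$, you get for each surviving $k$ that $\alpha_+k\in\ptg k$ or $\alpha_-k\in\ptg k$, and you then need a covering/pigeonhole argument (the surviving frequencies are not contained in a finite union of proper subspaces, so some $\ker(\alpha_\pm-\delta)$ must be all of $\RR^{n+1}$) to conclude $\alpha_+\in\ptg$ or $\alpha_-\in\ptg$; moreover, when $(0,y_0)\in\LL$ only frequencies in $\Ld\cap\left(\RR^n\times\{0\}\right)$ carry information, so this argument pins the candidate only on the horizontal hyperplane and a separate orthogonality argument is needed to conclude it equals $\alpha_+$ or $\alpha_-$. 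Second, the heights: the identity $c_{-\lambda}=\overline{c_\lambda}=e^{-2\pi i\lambda y_0}c_\lambda$ is the precise mechanism that forces the vertical translation component to be $0$ for an $\alpha_+$ lift and $y_0$ for an $\alpha_-$ lift, and the matched phases must then be shown to place $(v_\alpha,0)$ or $(v_\alpha,y_0)$ in the correct coset of $\LL$; none of this bookkeeping appears. Third, genericity cannot remove the degeneracy you mention: when $\sigma\in\ptg$ the projection is two-to-one on every orbit with $\lambda\neq 0$, each Fourier coefficient of $\Pi_{y_0}(I_k)$ is a two-term sum that can cancel, and ``comparing coefficients'' must mean comparing these sums --- this degenerate case is exactly where conditions (\ref{teorema-2}) and (\ref{teorema-4}) originate, so it cannot be sidestepped by choosing $k$ well. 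Until these are carried out, the ``only if'' direction is a programme, not a proof.
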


A similar result holds for the restriction:
\begin{teorema}[\cite{PinhoLabouriau}]\label{teorema-restricao}

All functions in $\Phi_r(X_\Gamma)$ are
invariant under the action of $(v_\alpha,\alpha) \in 
\RR^n \ltimes \Ort(n)$ if and only
if one of the following conditions holds:

\begin{enumerate}
\renewcommand{\theenumi}{\alph{enumi}}

\item\label{teorema-restricao-1} 
$\left((v_\alpha,0),\alpha_+
\right) \in \Gamma$,

\item\label{teorema-restricao-2}
$\left((v_\alpha,2r),\alpha_- \right) \in \Gamma$.

\end{enumerate}

\end{teorema}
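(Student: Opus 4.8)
The plan is to derive the restriction result directly from the definition of the restriction operator $\Phi_r$ together with the scalar action of $\Gamma$ on functions, following the same strategy that underlies Theorem~\ref{teorema-projeccao} but with the simplification that restriction to a single hyperplane does not involve integrating over a band. The key observation is that an element $(v_\alpha,\alpha)\in\Euc(n)$ acts on $\Phi_r(f)$ by $\left((v_\alpha,\alpha)\cdot\Phi_r(f)\right)(x)=f(\alpha^{-1}x-v_\alpha,\,r)$, so I must compare this with $f$ evaluated at points of the hyperplane $y=r$ and determine precisely which symmetries of $\Gamma$ force equality for \emph{every} $f\in X_\Gamma$.

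First I would fix $(v_\alpha,\alpha)$ and ask what it means for $\Phi_r(f)$ to be invariant for all $\Gamma$-invariant $f$. Using the Fourier expansion in terms of the Hilbert basis $I_k$, invariance for all $f$ is equivalent to invariance of each basis function, so the problem reduces to a statement about which waves $\omega_k$, $k\in\Ld$, are matched up by the candidate symmetry. Concretely, I would show that $\Phi_r(f)$ is $(v_\alpha,\alpha)$-invariant exactly when, for each $(x,r)$, the point $(\alpha^{-1}x-v_\alpha,\,r)$ lies in the same $\Gamma$-orbit as $(x,r)$ in a way that is uniform in $x$. There are two natural ways an element of $\Gamma$ can produce such a matching on the hyperplane $y=r$: either it preserves the last coordinate, which forces an orthogonal component of the form $\alpha_+$ and a translation vector with vanishing last entry, giving case~(\ref{teorema-restricao-1}) with $\left((v_\alpha,0),\alpha_+\right)\in\Gamma$; or it reflects the hyperplane $y=r$ onto itself through the map $y\mapsto 2r-y$, which forces an orthogonal component $\alpha_-$ and a translation whose last coordinate equals $2r$, giving case~(\ref{teorema-restricao-2}) with $\left((v_\alpha,2r),\alpha_-\right)\in\Gamma$.

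The forward direction (sufficiency) is then the routine check that each of these two types of element of $\Gamma$, when applied to $(x,r)$, lands again on the hyperplane $y=r$ and acts there as $x\mapsto\alpha x+v_\alpha$, so that $\Gamma$-invariance of $f$ immediately yields the claimed invariance of $\Phi_r(f)$; this is a direct computation with the semidirect product operation and the block form of $\alpha_+$ and $\alpha_-$. The harder direction is necessity: I must show that \emph{no other} element of $\Gamma$ can enforce invariance of $\Phi_r(f)$ for all $f\in X_\Gamma$. The main obstacle is ruling out spurious coincidences, namely that even though an individual $f$ might be invariant by accident, genericity of $f$ in $X_\Gamma$ prevents this unless one of the two structural conditions holds. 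I expect to handle this by separating elements of $\Gamma$ according to whether their orthogonal component has last-coordinate block $+1$ or $-1$ (the only possibilities compatible with $\alpha\in\Ort(n)$ acting on the restriction), and then using the fact that two waves $\omega_k$ and $\omega_{k'}$ restricted to $y=r$ coincide only when $k,k'$ have equal $\RR^n$-components, to pin down the last coordinate of the translation to be $0$ or $2r$ respectively.

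Compared with the projection theorem, I expect this argument to be shorter precisely because there is no averaging over $y\in[0,y_0]$: the band-dependent cases~(\ref{teorema-3}) and~(\ref{teorema-4}) of Theorem~\ref{teorema-projeccao}, which rely on the extra period $(0,y_0)\in\LL$ to shift the band, have no analogue here, since $\Phi_r$ samples a single value of $y$. The crux of the proof is therefore the clean dichotomy $y\mapsto y$ versus $y\mapsto 2r-y$ on the hyperplane, and I would organise the write-up so that this geometric fact is established first and then fed into the Fourier/genericity argument to obtain both directions.
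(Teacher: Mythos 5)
The paper itself offers no proof of this theorem: it is imported verbatim from \cite{PinhoLabouriau}, so the comparison can only be against what a complete proof must contain (and against the Fourier-based method of that source, which your plan does follow in outline). Your sufficiency direction is fine and essentially complete: an element $\left((v_\alpha,0),\alpha_+\right)$ or $\left((v_\alpha,2r),\alpha_-\right)$ of $\Gamma$ maps the hyperplane $y=r$ onto itself (the dichotomy $y\mapsto y$ versus $y\mapsto 2r-y$), induces $x\mapsto \alpha x+v_\alpha$ there, and so $\Gamma$-invariance of $f$ passes to $(v_\alpha,\alpha)$-invariance of $\Phi_r(f)$. Your remark that the band-dependent cases (\ref{teorema-3}) and (\ref{teorema-4}) of Theorem~\ref{teorema-projeccao} have no analogue here is also correct, since a translation $(0,y_1)$ moves the hyperplane $y=r$ off itself.

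The necessity direction, however, has a genuine gap. You assert that invariance of $\Phi_r(f)$ for every $f\in X_\Gamma$ means $(\alpha^{-1}x-v_\alpha,r)$ lies in the $\Gamma$-orbit of $(x,r)$ ``in a way that is uniform in $x$'', and you then only consider elements of $\Gamma$ whose orthogonal part has block form $\beta_+$ or $\beta_-$; your phrase ``the only possibilities compatible with $\alpha\in\Ort(n)$ acting on the restriction'' assumes precisely what has to be proved. Two steps are missing. First, pointwise orbit matching requires showing that $\Gamma$-invariant $C^1$ functions separate $\Gamma$-orbits (true, by mollifying continuous invariant functions, but not free). Second, and more seriously, a general element of the point group $\ptg\subset\Ort(n+1)$ need \emph{not} have block form --- the cubic lattices of Table~\ref{TabelaCompara}, positioned with a three-fold axis vertical, have point-group elements mixing the $y$-direction with the horizontal directions --- and a priori the matching element $\gamma_x\in\Gamma$ could vary with $x$ and be of this non-block type. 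One must extract a \emph{single} $\gamma\in\Gamma$ of block form: either write $\RR^n=\bigcup_{\gamma\in\Gamma}\{x:\gamma\cdot(x,r)=(\alpha^{-1}x-v_\alpha,r)\}$, note each set is closed, both maps are affine and $\Gamma$ is countable, and apply Baire; or, in your Fourier version, use finiteness of $\ptg$ to find one $\delta\in\ptg$ with $\proj(\delta k)=\alpha\,\proj(k)$ on a spanning set of $k\in\Ld$ (forcing $\delta=\alpha_+$ or $\delta=\alpha_-$), handle possible cancellation between the $\sigma$-related terms inside each basis function $I_k$ (two summands of $I_k$ restrict to the same wave in $x$ exactly when their frequencies are related by $\sigma$, and their phases can interfere; replacing $k$ by integer multiples $mk$ rules this out), and finally use the duality $\langle k,u\rangle\in\ZZ$ for all $k\in\Ld$ $\Rightarrow u\in\LL$ to conclude that the required translation really gives an element of $\Gamma$. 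Your appeal to ``genericity of $f$'' is the right instinct, but as written the spurious-coincidence problem you yourself identify as the crux is not actually resolved.
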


%\input{Asimetrias}
%18 jan 08 isabel
%7 out 09 isabel
%30 out 09 isabel
%10/viii/15 isabel
%20/viii/15 isabel
%9/xi/15 isabel
 \section{The symmetry group of projected functions}\label{seccao-periodos-funcoes}
 
 Let $\Gamma\subset\Euc(n+1)$ be a crystallographic group with lattice $\LL$ and point group $\ptg$.
 In this section we obtain a complete description of the group
 $\Gtil\le\Euc(n)$ of symmetries shared by all the projected patterns in $\Pi_{y_0}(X_\Gamma)$, by describing
  its translation subgroup $\Ltily$ 
  and its orthogonal component.  
 In particular, we 
describe conditions ensuring that the projections of
$\LL$-periodic functions are still periodic functions with $n$ linearly independent periods, i.e. conditions ensuring that $\Ltily$ is a lattice with $n$ generators.

%\subsection{periods}
We start by showing that the translation subgroup $\Ltily$
of common periods of the projected functions in $\Pi_{y_0}(X_\Gamma)$,
regarded as a $\ZZ$-module, has  at most $n$ generators.
We also characterise the situations when $\Ltily$ has 
 exactly $n$ generators and thus  is a lattice. 
This means that the symmetry group of the projected functions is a crystallographic group.

Let $\LL_\sigma=v_\sigma+\LL$ if 
$\left(v_\sigma,\sigma\right)\in\Gamma$ and $\LL_\sigma=\emptyset$ if $\sigma\not\in\ptg$.
In all cases $\Ltily$ is a subset of $\{\proj(\LL),\proj(\LL_\sigma)\}_{\ZZ}$,
by Theorem~\ref{teorema-projeccao}.

 \begin{teorema}\label{ThLtilModulo}
 Let $\Gamma\subset\Euc(n+1)$ be a crystallographic group with lattice $\LL$ and point group $\ptg$.
Then the  group $\Ltily$  of common periods of $\Pi_{y_0}(X_\Gamma)$, regarded as a $\ZZ$-module, has  at most $n$ generators.
 If either $(0,y_0)\in\LL$ or 
  $\sigma\in\ptg$,  then $\Ltily$ is a lattice.
 \end{teorema}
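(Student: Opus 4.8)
The plan is first to make the description of $\Ltily$ explicit. An element of $\Ltily$ is a pure translation $(\pp,I\!d_n)\in\Euc(n)$ fixing every $\Pi_{y_0}(f)$, so I would apply Theorem~\ref{teorema-projeccao} with $\alpha=I\!d_n$, for which $\alpha_+=I\!d_{n+1}$ and $\alpha_-=\sigma$. The four cases then read: (a) $(\pp,0)\in\LL$; (b) $((\pp,y_0),\sigma)\in\Gamma$; (c) $(0,y_0)\in\LL$ and $(\pp,y_1)\in\LL$ for some $y_1$; and (d) $(0,y_0)\in\LL$ and $((\pp,y_1),\sigma)\in\Gamma$ for some $y_1$. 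Writing $A=\restr_0(\LL)=\{\pp:(\pp,0)\in\LL\}$, I record that $A=\LL\cap(\RR^n\times\{0\})$ is discrete in $\RR^n$, hence a lattice of rank at most $n$, and that condition (a) says precisely $\pp\in A$.

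The argument then rests on two structural facts. If $\sigma\in\ptg$ and $((\pp,t),\sigma)\in\Gamma$, squaring this element gives $((\pp,t)+\sigma(\pp,t),\sigma^2)=((2\pp,0),I\!d_{n+1})\in\Gamma$, so $(2\pp,0)\in\LL$, i.e. $2\pp\in A$; applying the same computation to any $(x,y)\in\LL$ (using $\sigma\LL=\LL$) gives $(2x,0)\in\LL$, whence $2\proj(\LL)\subseteq A$. Since the generators of $\LL$ span $\RR^{n+1}$, $\proj(\LL)$ spans $\RR^n$, so in this case $A$ spans $\RR^n$ and is a full-rank lattice. Separately, if $(0,y_0)\in\LL$ then $\LL$ contains a vertical vector and Lemma~\ref{lemaGeradoresAdaptados} furnishes adapted generators $(0,b_0),(a_i,b_i)$ with the $a_i$ linearly independent; hence $\proj(\LL)=\{a_1,\dots,a_n\}_\ZZ$ is a full-rank lattice, and condition (c) gives $\proj(\LL)\subseteq\Ltily$.

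To bound the number of generators I would show $\Ltily$ is always discrete, since a discrete subgroup of $\RR^n$ is free of rank at most $n$ and a free $\ZZ$-module of rank $r$ needs exactly $r$ generators. Each of the four conditions confines $\pp$ to $\frac12\proj(\LL)$ or $\frac12 A$: type (a) gives $\pp\in A$, type (c) gives $\pp\in\proj(\LL)$, and for types (b) and (d) the doubling identity gives $2\pp\in A\subseteq\proj(\LL)$. Thus $\Ltily\subseteq\frac12\proj(\LL)$ when $(0,y_0)\in\LL$ (where $\proj(\LL)$ is discrete), while $\Ltily\subseteq\frac12 A$ when $(0,y_0)\notin\LL$ (where conditions (c), (d) are void and $A$ is discrete). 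In either case $\Ltily$ is a subgroup of a discrete group, hence discrete, so it has at most $n$ generators.

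For the second assertion I would exhibit a full-rank lattice inside the discrete group $\Ltily$: if $\sigma\in\ptg$ then $A\subseteq\Ltily$ is full rank by the first structural fact, and if $(0,y_0)\in\LL$ then $\proj(\LL)\subseteq\Ltily$ is full rank by the second; a discrete subgroup of $\RR^n$ containing a full-rank lattice is itself a full-rank lattice. I expect the main obstacle to be conceptual rather than computational: a finitely generated subgroup of $\RR^n$ may have $\ZZ$-rank exceeding $n$ (as with $\ZZ+\ZZ\sqrt2\subset\RR$), so the bound on the number of generators is genuine and rests entirely on establishing discreteness. The delicate case is $\sigma\in\ptg$ with $(0,y_0)\notin\LL$, where $\proj(\LL)$ itself need not be discrete; there it is the identity $2\pp\in A$ that both confines $\Ltily$ to the lattice $\frac12 A$ and forces $A$ to be full rank.
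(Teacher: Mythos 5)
Your proof is correct, and although it opens exactly as the paper's does --- applying Theorem~\ref{teorema-projeccao} with $\alpha=I\!d_n$, splitting on whether $(0,y_0)\in\LL$, and exploiting the squaring identity $\left((v,t),\sigma\right)^2=\left((2v,0),I\!d_{n+1}\right)$ --- it establishes the lattice property by a genuinely different mechanism. The paper proves two constructive lemmas (Lemmas~\ref{lemay0emL} and~\ref{lemay0foraL}) that identify $\Ltily$ by explicit generators: when $(0,y_0)\in\LL$ it writes $v_2=\sum_i s_i\proj(l_i)$, forces each $s_i\in\{0,\frac{1}{2}\}$, and exhibits $\Ltily=\left\{v_2,\proj(l_2),\ldots,\proj(l_n)\right\}_\ZZ$; when $\sigma\LL=\LL$ it shows $\restr_0(\LL)$ has full rank by listing generators $(a_1,0),\ldots,(a_k,0),(a_{k+1},b/2),\ldots,(a_n,b/2)$ and pairing the half-height ones. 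You replace this bookkeeping by two soft observations: the identity $(x,y)+\sigma(x,y)=(2x,0)$ gives $2\proj(\LL)\subseteq\restr_0(\LL)$ in one line (so $\restr_0(\LL)$ has full rank whenever $\sigma\in\ptg$), and the same identity confines $\Ltily$ inside $\frac{1}{2}\proj(\LL)$ when $(0,y_0)\in\LL$ and inside $\frac{1}{2}\restr_0(\LL)$ otherwise; discreteness plus the structure theorem for discrete subgroups of $\RR^n$ (free of rank at most $n$, full rank if a full-rank sublattice is contained) then yields both assertions at once. Your route is shorter, and it correctly isolates the only delicate configuration ($\sigma\in\ptg$ with $(0,y_0)\notin\LL$, where $\proj(\LL)$ may be dense); what it does not deliver is the exact description of $\Ltily$ --- as $\proj(\LL)\cup\proj(\LL_\sigma)$, or as $\left\{\restr_0(\LL),v_1\right\}_\ZZ$ --- which the paper's lemmas provide and on which the case-by-case classifications of Sections~\ref{secProjectGeneral} and~\ref{SecProjectionToHex} depend. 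So the two arguments are complementary: yours is the more economical proof of the theorem as stated, the paper's is the one that feeds the rest of the article.
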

 
  \begin{proof}
 We use Theorem~\ref{teorema-projeccao} in the case $\alpha=I\!d_n$.
 If  $(0,y_0)\not\in\LL$ then  $\pp\in\Ltily$ if and only if one of the conditions  (\ref{teorema-1}) or (\ref{teorema-2}) of Theorem~\ref{teorema-projeccao} holds for $v_\alpha=\pp$. 
Condition  (\ref{teorema-1}) means 
$\pp\in\restr_0(\LL)$. 
Condition (\ref{teorema-2}) means 
$(\pp,y_0) \in \LL_\sigma$, which is equivalent to 
$\pp\in\restr_{y_0}(\LL_\sigma)$.
Thus  
\begin{equation}\label{LsigmaSemy0}
(0,y_0)\not\in\LL
\qquad \Longrightarrow\qquad
\Ltily=\restr_0(\LL)\cup \restr_{y_0}(\LL_\sigma)
\end{equation}
This case is treated in Lemma~\ref{lemay0foraL} below.

If  $(0,y_0)\in\LL$ and $\pp\in\Ltily$ 
then any one of the conditions  of Theorem~\ref{teorema-projeccao} may hold. 
The first two imply 
$\pp\in\restr_0(\LL)\cup \restr_{y_0}(\LL_\sigma)\subset\proj(\LL)\cup \proj(\LL_\sigma)$ 
as seen above.
The last two imply that  $(\pp,y_1)\in\LL\cup\LL_\sigma$ for some $y_1\in\RR$
 and therefore $\Ltily\subset\proj(\LL)\cup \proj(\LL_\sigma)$.
It follows immediately from Theorem~\ref{teorema-projeccao} that
\begin{equation}\label{LsigmaComy0}
(0,y_0)\in\LL
\qquad \Longrightarrow\qquad
\Ltily=\proj(\LL)\cup \proj(\LL_\sigma).
 \end{equation}
 It remains to show that this implies that $\Ltily$ is a lattice in $\RR^n$,
 we treat this case in Lemma~\ref{lemay0emL}.
 \end{proof}

\begin{lema}\label{lemay0emL}
Let $\Gamma\subset\Euc(n+1)$ be a crystallographic group with lattice $\LL$.
If $(0,y_0)\in\LL$ then 
the group $\Ltily$  of common periods of all functions in $\Pi_{y_0}(X_\Gamma)$
 is the lattice $\Ltily=\proj(\LL)\cup \proj(\LL_\sigma)$.
 \end{lema}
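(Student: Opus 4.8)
The plan is to prove that $\Ltily$, which by~\eqref{LsigmaComy0} equals $\proj(\LL)\cup\proj(\LL_\sigma)$, is a lattice in $\RR^n$. As a group of periods, $\Ltily$ is automatically a subgroup of $(\RR^n,+)$, so it will suffice to show that this subgroup is discrete and spans $\RR^n$: any such subgroup is a lattice with $n$ linearly independent generators.

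First I would show that $\proj(\LL)$ alone is a lattice of full rank. Because $(0,y_0)\in\LL$ with $y_0>0$, the module $\LL$ has a minimal nonzero vertical element $(0,b_0)$, so Lemma~\ref{lemaGeradoresAdaptados} furnishes adapted generators $\{(0,b_0),(a_i,b_i),\ i=1,\ldots,n\}$ with $a_1,\ldots,a_n\in\RR^n$ linearly independent. Applying $\proj$ annihilates $(0,b_0)$ and carries $(a_i,b_i)$ to $a_i$, so $L_0:=\proj(\LL)=\{a_1,\ldots,a_n\}_\ZZ$ is a full-rank lattice spanning $\RR^n$. Here the hypothesis $(0,y_0)\in\LL$ is exactly what makes this work: without a vertical vector the map $\proj|_{\LL}$ would be injective, and $\proj(\LL)$ would be a non-discrete subgroup of $\RR^n$ of rank $n+1$.

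Next I treat the two cases for $\LL_\sigma$. If $\sigma\notin\ptg$ then $\LL_\sigma=\emptyset$ and $\Ltily=L_0$ is already the claimed lattice. If $\sigma\in\ptg$, I would fix $(v_\sigma,\sigma)\in\Gamma$ and write $v_\sigma=(w,c)$ with $w\in\RR^n$, so that $\proj(\LL_\sigma)=w+L_0$ is a single coset of $L_0$. The crucial computation is $(v_\sigma,\sigma)^2=(v_\sigma+\sigma v_\sigma,I\!d_{n+1})$; since $\sigma(w,c)=(w,-c)$ this forces the translation $(2w,0)\in\LL$, whence $2w\in L_0$. It follows that $L_0\cup(w+L_0)$ coincides with the subgroup $L_0+\ZZ w$ generated by $L_0$ and $w$, which contains $L_0$ with index one or two. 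As a finite-index overgroup of the discrete set $L_0$ it is discrete, and it spans $\RR^n$ because it contains $L_0$; therefore it is a lattice with $n$ linearly independent generators, which is precisely the assertion.

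The only genuine difficulty is the case $\sigma\in\ptg$: one must check that adjoining the coset $\proj(\LL_\sigma)$ yields a closed, discrete set rather than introducing a spurious extra generator, and this rests entirely on the relation $2w\in L_0$ coming from $\sigma^2=I\!d_{n+1}$. The remainder is routine bookkeeping with the adapted generators of Lemma~\ref{lemaGeradoresAdaptados}.
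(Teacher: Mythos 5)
Your proof is correct, and its skeleton is the same as the paper's: both reduce to \eqref{LsigmaComy0}, both get full rank of $\proj(\LL)$ from a generating set of $\LL$ containing a minimal vertical vector (your appeal to Lemma~\ref{lemaGeradoresAdaptados}; the paper writes $\LL=\{(0,a),l_1,\ldots,l_n\}_\ZZ$ directly), and both hinge on the same key computation $\left((v_1,y_1),\sigma\right)^2=\left((2v_1,0),I\!d_{n+1}\right)$, which is exactly \eqref{sigma2a}. The one genuine difference is the finish. You invoke the structure theorem for discrete subgroups of $\RR^n$: since $2w\in\proj(\LL)$, the union $\proj(\LL)\cup\left(w+\proj(\LL)\right)$ is the group $\proj(\LL)+\ZZ w$, a finite-index overgroup of a full-rank lattice, hence discrete, spanning, and therefore a lattice. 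The paper instead argues constructively: it normalizes the translation part to $v_2=\sum_i s_i\,\proj(l_i)$ with each $s_i\in\{0,1/2\}$ and exhibits the explicit basis $\{v_2,\proj(l_2),\ldots,\proj(l_n)\}_\ZZ$. Your route is shorter and avoids the normalization bookkeeping; the paper's constructive version is not wasted effort, though, because the normalized representatives $v_1\in\left\{0,\tfrac{1}{2}\na,\tfrac{1}{2}\nb,\tfrac{1}{2}(\na+\nb)\right\}$ are precisely what is reused in cases 4 and 5 of Section~\ref{secProjectGeneral}, where the explicit generators of $\Ltily$ for each $y_0$ are needed.
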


\begin{proof}
Let $a=y_0/m$ where $m$ is the largest integer such that  $(0,y_0/m)\in\LL$.
We may write $\LL=\{(0,a),l_1,\ldots,l_n \}_\ZZ$ where the $l_i$, $i=1,\ldots,n$ are
linearly independent over $\RR$,
therefore $\proj(l_i)$,  are linearly independent  generators for $\proj(\LL)$.
If $\sigma$ is not in the point group $\ptg$ of $\Gamma$, then by (\ref{LsigmaComy0})
we have that $\Ltily=\proj(\LL)$ has $n$ generators.

If $\left((v_1,y_1),\sigma\right)\in\Gamma$ and  if $v_1=0\pmod{\LL}$ then $\proj(\LL_\sigma)\subset \proj(\LL)$ and hence $\Ltily=\proj(\LL)$ has $n$ generators. Otherwise,
\begin{equation}\label{sigma2a}
\left((v_1,y_1),\sigma\right)\cdot\left((v_1,y_1),\sigma\right)=
\left((2v_1,0),I\!d_{n+1}\right)\in\Gamma
\quad\Longrightarrow\quad
(2v_1,0)\in\LL
\end{equation}
and therefore $2v_1=2\proj(v_1,y_1)\in\proj(\LL)$.
To see that $\Ltily$ is a lattice  in this case,
note that $\left((v_1,y_1),\sigma\right)\in\Gamma$ implies that there exists $v_2=\sum_{i=1}^n s_i \proj(l_i)$ 
with $0\le s_i<1$ and $\left((v_2,y_2),\sigma\right)\in\Gamma$ for some $y_2\in\RR$,
and $2v_2\in\proj(\LL)$ by \eqref{sigma2a}.
Hence, each $s_i$ is either 0 of 1/2 and not all of them are zero.
Without loss of generality, suppose
 $s_1=1/2$, then    $v_2=\frac{1}{2}\proj( l_1)+ \sum_{i=2}^n s_i\proj( l_i)$ and thus
$\proj(l_1)=2v_2- \sum_{i=2}^n 2s_i \proj(l_i)$ with $2s_i\in\ZZ$, hence 
$\Ltily=\proj(\LL)\cup \proj(\LL_\sigma)=\left\{ v_2,\proj(l_2),\ldots, \proj(l_n)\right\}_\ZZ$ and the result follows.
\end{proof}

\begin{lema}\label{lemay0foraL}
Let $\Gamma\subset\Euc(n+1)$ be a crystallographic group with lattice $\LL$ and point group $\ptg$
and suppose $(0,y_0)\not\in\LL$.
If  $\left((v_1,y_0),\sigma \right) \in \Gamma$
then  the group $\Ltily$  of common periods of all functions in $\Pi_{y_0}(X_\Gamma)$ is the lattice $\Ltily=\{\restr_0(\LL),v_1\}_{\ZZ}$.
If $\left((v_1,y_0),\sigma \right) \not\in \Gamma$
then $\Ltily=\restr_0(\LL)$. 
If $\sigma$ is in the holohedry of $\LL$
then $\Ltily$ is a lattice, otherwise $\Ltily$
may have less than $n$  linearly independent  generators.
\end{lema}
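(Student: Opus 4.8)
The plan is to build directly on the description of $\Ltily$ already obtained while proving Theorem~\ref{ThLtilModulo}. Since we are assuming $(0,y_0)\notin\LL$, equation~\eqref{LsigmaSemy0} gives $\Ltily=\restr_0(\LL)\cup\restr_{y_0}(\LL_\sigma)$, so the whole argument reduces to computing the second piece $\restr_{y_0}(\LL_\sigma)$ and then deciding how many independent generators the resulting $\ZZ$-module has. The useful reformulation is that the elements of $\Gamma$ with orthogonal component $\sigma$ are exactly $\{(w,\sigma):w\in\LL_\sigma\}$, so that $x\in\restr_{y_0}(\LL_\sigma)$ if and only if $((x,y_0),\sigma)\in\Gamma$.

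In the first case I would assume $((v_1,y_0),\sigma)\in\Gamma$, so that $\LL_\sigma=(v_1,y_0)+\LL$; an element of $\LL_\sigma$ has last coordinate equal to $y_0$ exactly when it is of the form $(v_1,y_0)+(l,0)$ with $(l,0)\in\LL$, whence $\restr_{y_0}(\LL_\sigma)=v_1+\restr_0(\LL)$ and $\Ltily=\restr_0(\LL)\cup(v_1+\restr_0(\LL))$. To identify this union with $\{\restr_0(\LL),v_1\}_\ZZ$ I would square the symmetry exactly as in~\eqref{sigma2a}, obtaining $((v_1,y_0),\sigma)^2=((2v_1,0),I\!d_{n+1})\in\Gamma$ and hence $2v_1\in\restr_0(\LL)$. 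Since $\restr_0(\LL)$ is a group containing $2v_1$, the multiple $mv_1$ lands in $\restr_0(\LL)$ for $m$ even and in $v_1+\restr_0(\LL)$ for $m$ odd; this shows at once that $\Ltily$ is closed under addition and that $\Ltily=\restr_0(\LL)+\ZZ v_1=\{\restr_0(\LL),v_1\}_\ZZ$. In the second case, when no $v_1$ places $((v_1,y_0),\sigma)$ in $\Gamma$ (in particular when $\sigma\notin\ptg$, where $\LL_\sigma=\emptyset$), the reformulation gives $\restr_{y_0}(\LL_\sigma)=\emptyset$ and so $\Ltily=\restr_0(\LL)$.

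It then remains to count generators, and here the point is that $\restr_0(\LL)$ is full-dimensional as soon as $\sigma$ belongs to the holohedry. Indeed, if $\sigma\LL=\LL$ then for each $(l,d)\in\LL$ we get $\sigma(l,d)=(l,-d)\in\LL$, hence $(2l,0)\in\LL$, so that $2\proj(\LL)\subset\restr_0(\LL)$. As $\proj(\LL)$ spans $\RR^n$, the discrete group $\restr_0(\LL)$ (discreteness being inherited from $\LL$) spans $\RR^n$ as well and is therefore a lattice with $n$ generators; in the first case $\Ltily$ contains $\restr_0(\LL)$ with index at most two and so is again a rank-$n$ lattice. This incidentally confirms that the module of the first case is genuinely a lattice, because any element with orthogonal component $\sigma$ forces $\sigma\in\ptg$, and $\ptg$ is a subgroup of the holohedry. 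For the final clause I would simply exhibit a counterexample with $\sigma$ outside the holohedry: in $\RR^2$, take $\LL=\{(1,\alpha),(0,1)\}_\ZZ$ with $\alpha$ irrational and $y_0\notin\ZZ$, so that $(0,y_0)\notin\LL$, $\restr_0(\LL)=\{0\}$, and the point group $\{\pm I\!d_2\}$ does not contain $\sigma$; then $\Ltily=\{0\}$ has fewer than $n=1$ generators.

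The step I expect to be the main obstacle is the generator count: one has to pin down that the correct hypothesis forcing $\restr_0(\LL)$ to be full-dimensional is that $\sigma$ lies in the holohedry rather than merely that $\sigma\in\ptg$ for the particular $\Gamma$ at hand, to check that $\proj(\LL)$ really does span $\RR^n$ so that the inclusion $2\proj(\LL)\subset\restr_0(\LL)$ suffices, and to verify that the index-two enlargement in the first case preserves discreteness. Everything else is bookkeeping on cosets of $\restr_0(\LL)$.
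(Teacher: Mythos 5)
Your proposal is correct, and its overall skeleton is the same as the paper's: both start from \eqref{LsigmaSemy0}, compute $\restr_{y_0}(\LL_\sigma)=v_1+\restr_0(\LL)$ from the coset structure of the elements of $\Gamma$ with orthogonal component $\sigma$, and use the squared element \eqref{sigma2a} to get $2v_1\in\restr_0(\LL)$, whence $\restr_0(\LL)\cup\bigl(v_1+\restr_0(\LL)\bigr)=\left\{\restr_0(\LL),v_1\right\}_\ZZ$. Where you genuinely diverge is the key sub-claim that $\restr_0(\LL)$ has $n$ independent generators when $\sigma\LL=\LL$. The paper argues constructively: it normalises the generators of $\LL$ so that their last coordinates lie in $\{0,b/2\}$ and then exhibits explicit generators of $\restr_0(\LL)$ through a case analysis on the number $k$ of generators with last coordinate zero. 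You argue abstractly: $\sigma\LL=\LL$ forces $(2l,0)\in\LL$ for every $(l,d)\in\LL$, so $2\proj(\LL)\subset\restr_0(\LL)$; since $\proj(\LL)$ spans $\RR^n$ and $\restr_0(\LL)$ inherits discreteness from $\LL$, it is a rank-$n$ lattice, and the index-two extension $\left\{\restr_0(\LL),v_1\right\}_\ZZ$ remains discrete as a finite union of cosets. Your route is shorter and in fact more robust: the paper's displayed generating set $\left\{a_1,\ldots,a_k,a_{k+1}+a_{k+2},\ldots,a_n+a_{k+1}\right\}_\ZZ$ is incomplete when $n-k$ is even (already for $n-k=2$ the two cyclic sums coincide, so that set spans only a rank-$(n-1)$ submodule and misses, for instance, $2a_{k+1}$), a lapse to which your rank argument is immune. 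What the paper's construction buys in return is the explicit normal form for $\LL$ and for $\restr_0(\LL)$ (last coordinates in $\{0,c/2\}$, restrictions such as $\left\{2\na,\nb\right\}_\ZZ$) that is reused throughout Sections~\ref{secProjectGeneral} and~\ref{SecProjectionToHex}, so the two proofs are not interchangeable for the purposes of the rest of the paper. Your closing counterexample ($\LL=\{(1,\alpha),(0,1)\}_\ZZ$ with $\alpha$ irrational, giving $\Ltily=\{0\}$) correctly substantiates the final clause, which the paper only asserts.
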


\begin{proof}
First we show that if $\sigma\LL=\LL$ then $\restr_0(\LL)$ is a lattice.
To see this, note that if $(x,y)\in\LL$, then 
$(x,y)-\sigma(x,y)=(0,2y)\in\LL$.
Thus, if $y\ne 0$, we 
may write
$$
\LL=\left\{(0,b),(a_1,0),\ldots(a_k,0),(a_{k+1},b/2),\ldots,(a_n,b/2) \right\}_\ZZ
$$
for some $k$, $1\le k\le n$, with the $a_i$ linearly independent over $\RR$.

If $k=n$ the claim is proved.
If $k=n-1$ then 
$$
\LL=\left\{(0,b),(a_1,0),\ldots(a_{n-1},0),(a_{n},b/2)\right\}_\ZZ
$$
and $(2a_n,0)\in\LL$ hence $\restr_0(\LL)=\left\{a_1,\ldots,a_{n-1},2a_n\right\}$ is a lattice.
Otherwise note that 
for $i=1,\ldots,n-k$, and taking $a_{n+1}=a_{k+1}$ we have
$$(a_{k+i},b/2)+(a_{k+i+1},b/2)-(0,b)=(a_{k+i}+a_{k+i+1},0)\in\LL \pmod{n-k}$$
and thus
$$\restr_0(\LL)=\left\{a_1,\ldots,a_k,a_{k+1}+a_{k+2},\ldots,a_n+a_{k+1} \right\}_\ZZ$$
proving the claim that  $\restr_0(\LL)$ is a lattice.

If  $\left((v_1,y_0),\sigma \right) \in \Gamma$ then 
$$
\restr_{y_0}(\LL_\sigma)=
\left\{ x=v_1+a: \ (a,b)\in\LL \mbox{ and } y_0+b=y_0 \right\}_\ZZ
= v_1+\restr_0(\LL) .
$$
 If $v_1\in\restr_0(\LL)$ then, since $(0,y_0)\not\in\LL$,  it follows  from  (\ref{LsigmaSemy0}) that
 $\Ltily=\{\restr_0(\LL),v_1\}_{\ZZ}=\restr_0(\LL)$.
Otherwise $2v_1\in\restr_0(\LL)$ by~(\ref{sigma2a}) 
and $\Ltily$ is a lattice 
  as claimed.

If $(0,y_0)\not\in\LL$ and $\left((v_1,y_0),\sigma \right) \not\in \Gamma$ then 
$\Ltily=\restr_0(\LL)$, by  (\ref{LsigmaSemy0}).
If $\sigma$ is in the holohedry of $\LL$ then $\Ltily=\restr_0(\LL)$ is a lattice, 
otherwise it may have fewer than $n$ generators.
\end{proof}

In the next results we use the symbol
$(v_\delta,\delta)$ for any element
of the coset $\LL
\cdot (v,\delta)=\{(l+v,\delta) : l\in \LL\}$ for any $v\in 
\RR^{n+1}$ such that $(v, \delta) \in \Gamma$, {\sl i.e.}, $v_\delta$ is the
{\em non-orthogonal component} of $(v,\delta)\in \Gamma$
defined up to elements of
$\LL$. 

A direct application of Theorem~\ref{teorema-projeccao} yields:

 \begin{proposicao}\label{ThJtil}
 Let $\Gamma\subset\Euc(n+1)$ be a crystallographic group with lattice $\LL$ and point group $\ptg$ and let $\ptgtil$ be the point group of the symmetry group of $\Pi_{y_0}(X_\Gamma)$.
Then  $\ptgtil$ is a subgroup of 
$$\ptg_0=\left\{\alpha: \mbox{either }
\alpha_+\in\ptg \mbox{ or } \alpha_-\in\ptg\right\}\le\Ort(n),
$$
satisfying:
 \begin{enumerate}
 \item\label{JtilComy0}
 if $(0,y_0)\in\LL$ then  $\ptgtil=\ptg_0$;
 \item\label{JtilSemy0}
  if $(0,y_0)\not\in\LL$ then  \\
  $\ptgtil=\{\alpha\in\ptg_0 : \mbox{either }v_{\alpha_{+}}=(v,0) \mbox{ or }
v_{\alpha_{-}}=(v,y_0)  \mbox{ for some } v\in\RR^n\}$.
 \end{enumerate}
 Moreover, for all $\alpha\in\ptgtil$ we have either
 $v_\alpha=\proj( v_{\alpha_{+}})$ or 
 $v_\alpha=\proj( v_{\alpha_{-}})$.
 \end{proposicao}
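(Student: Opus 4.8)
The plan is to read all the assertions directly off Theorem~\ref{teorema-projeccao}, applied with a general orthogonal part $\alpha$. By definition $\alpha\in\ptgtil$ precisely when there is some $v_\alpha\in\RR^n$ for which $(v_\alpha,\alpha)$ is a symmetry shared by every function in $\Pi_{y_0}(X_\Gamma)$, and Theorem~\ref{teorema-projeccao} says this happens exactly when one of its four conditions (\ref{teorema-1})--(\ref{teorema-4}) holds for that $v_\alpha$. First I would dispose of the inclusion $\ptgtil\subseteq\ptg_0$: conditions (\ref{teorema-1}) and (\ref{teorema-3}) each exhibit an element of $\Gamma$ with orthogonal component $\alpha_+$, so they force $\alpha_+\in\ptg$, while (\ref{teorema-2}) and (\ref{teorema-4}) similarly force $\alpha_-\in\ptg$; either way $\alpha\in\ptg_0$. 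That $\ptgtil$ is genuinely a subgroup, and not merely a subset, is automatic, since it is the image of the symmetry group of $\Pi_{y_0}(X_\Gamma)$ under the homomorphism $(v,\delta)\mapsto\delta$.

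For Case~(\ref{JtilComy0}), where $(0,y_0)\in\LL$, all four conditions of Theorem~\ref{teorema-projeccao} are available, and I would prove the reverse inclusion $\ptg_0\subseteq\ptgtil$. Given $\alpha\in\ptg_0$, either $\alpha_+\in\ptg$ or $\alpha_-\in\ptg$, so there is $\left((v,y_1),\alpha_+\right)\in\Gamma$ or $\left((v,y_1),\alpha_-\right)\in\Gamma$ for some $v\in\RR^n$ and $y_1\in\RR$; combined with $(0,y_0)\in\LL$ this is exactly condition (\ref{teorema-3}) or (\ref{teorema-4}) with $v_\alpha=v$, whence $\alpha\in\ptgtil$. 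This yields $\ptgtil=\ptg_0$.

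For Case~(\ref{JtilSemy0}), where $(0,y_0)\notin\LL$, conditions (\ref{teorema-3}) and (\ref{teorema-4}) are vacuous, so $\alpha\in\ptgtil$ if and only if (\ref{teorema-1}) or (\ref{teorema-2}) holds. The content here is the translation of these two conditions into statements about the non-orthogonal component: (\ref{teorema-1}) holds for some $v_\alpha$ exactly when the coset of $\alpha_+$ admits a representative with vanishing $y$-coordinate, i.e. $v_{\alpha_+}=(v,0)$ for some $v\in\RR^n$, and likewise (\ref{teorema-2}) holds exactly when $v_{\alpha_-}=(v,y_0)$ for some $v$. This is precisely the description of $\ptgtil$ in (\ref{JtilSemy0}). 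The final ``moreover'' clause then falls out of the same bookkeeping: in each of the four cases the element of $\Gamma$ produced has its first $n$ translation coordinates equal to $v_\alpha$, so applying $\proj$ to the relevant non-orthogonal component recovers $v_\alpha=\proj(v_{\alpha_+})$ or $v_\alpha=\proj(v_{\alpha_-})$, understood as usual modulo $\proj(\LL)$.

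The only real subtlety, and the step I would treat most carefully, is the passage between the membership $\left((v_\alpha,c),\alpha_\pm\right)\in\Gamma$ and the stated condition on $v_{\alpha_\pm}$, because the non-orthogonal component is only defined modulo $\LL$. I must check that requiring a representative with a prescribed $y$-coordinate, namely $0$ in case (\ref{teorema-1}) and $y_0$ in case (\ref{teorema-2}), is well posed and is exactly what those two conditions encode. Everything else is a direct unwinding of Theorem~\ref{teorema-projeccao}, needing no further geometric input.
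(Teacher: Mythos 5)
Your proof is correct and follows exactly the route the paper intends: the paper offers no separate argument for Proposition~\ref{ThJtil}, introducing it only with the phrase ``A direct application of Theorem~\ref{teorema-projeccao} yields,'' and your unwinding of conditions (\ref{teorema-1})--(\ref{teorema-4}) into the two cases $(0,y_0)\in\LL$ and $(0,y_0)\notin\LL$ is precisely that application. Your explicit attention to the coset ambiguity in $v_{\alpha_\pm}$ is a sound piece of bookkeeping that the paper leaves implicit.
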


  A similar and simpler application of Theorem~\ref{teorema-restricao} yields a result for  the restriction:
 
  \begin{proposicao}\label{PropRestrLsigma}
 Let $\Gamma\subset\Euc(n+1)$ be a crystallographic group with lattice $\LL$ and point group $\ptg$.
 Let $\widehat{\Gamma}$ be  the group of symmetries shared by all functions in $\Phi_r(X_\Gamma)$.
Then  the subgroup of translations $\widehat\LL$ of  $\widehat{\Gamma}$ is
 $$
 \widehat\LL=\restr_0(\LL)\cup \restr_{2r}(\LL_\sigma),
 $$
 and the point group ${{\rm\bf \widehat J}}$ of $\widehat{\Gamma}$ is
$${{\rm\bf \widehat J}}=\{\alpha\in\ptg_0 :  \mbox{either } v_{\alpha_{+}}=(v,0) \mbox{ or }
v_{\alpha_{-}}=(v,2r)  \mbox{ for some } v\in\RR^n\}$$
and for all $\alpha\in{{\rm\bf \widehat J}}$ we have either
 $v_\alpha=\proj( v_{\alpha_{+}})$ or 
 $v_\alpha=\proj( v_{\alpha_{-}})$.
 \end{proposicao}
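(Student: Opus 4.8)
The plan is to apply Theorem~\ref{teorema-restricao} in exactly the way Proposition~\ref{ThJtil} applies Theorem~\ref{teorema-projeccao}; the argument is in fact simpler, since for the restriction there is no case split depending on whether $(0,y_0)\in\LL$, only the two uniform alternatives (a) $\left((v_\alpha,0),\alpha_+\right)\in\Gamma$ and (b) $\left((v_\alpha,2r),\alpha_-\right)\in\Gamma$. Throughout I identify $\widehat{\Gamma}$ with the set of all $(v_\alpha,\alpha)\in\RR^n\ltimes\Ort(n)$ fixing every function of $\Phi_r(X_\Gamma)$, and Theorem~\ref{teorema-restricao} says this set is precisely the pairs satisfying (a) or (b).

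First I would compute the translation subgroup by specialising to $\alpha=I\!d_n$, so that $\alpha_+=I\!d_{n+1}$ and $\alpha_-=\sigma$. Then (a) reads $(\pp,0)\in\LL$, i.e. $\pp\in\restr_0(\LL)$, while (b) reads $\left((\pp,2r),\sigma\right)\in\Gamma$, which by the definition of $\LL_\sigma$ is equivalent to $(\pp,2r)\in\LL_\sigma$, i.e. $\pp\in\restr_{2r}(\LL_\sigma)$. This yields $\widehat\LL=\restr_0(\LL)\cup\restr_{2r}(\LL_\sigma)$ as a set; when $\sigma\notin\ptg$ the second alternative is vacuous and $\restr_{2r}(\LL_\sigma)=\restr_{2r}(\emptyset)=\emptyset$, so the formula still holds. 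To confirm that this union is a genuine $\ZZ$-module (as it must be, being the translation subgroup of the group $\widehat\Gamma$), I would write $\restr_{2r}(\LL_\sigma)=v_1+\restr_0(\LL)$ for any $\left((v_1,2r),\sigma\right)\in\Gamma$, exactly as in the proof of Lemma~\ref{lemay0foraL}. Squaring that element as in \eqref{sigma2a} (here with last coordinate $2r$) gives $(2v_1,0)\in\LL$, hence $2v_1\in\restr_0(\LL)$; closure under addition and inverses then follows, since the sum of two elements of the $\sigma$-coset lands in $2v_1+\restr_0(\LL)=\restr_0(\LL)$ and $-v_1=v_1-2v_1\in v_1+\restr_0(\LL)$.

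For the point group I would run the same two alternatives for general $\alpha$. By the convention fixed earlier, $\alpha\in{\rm\bf \widehat J}$ exactly when some $(v_\alpha,\alpha)$ satisfies (a) or (b). Alternative (a) forces $\alpha_+\in\ptg$ and selects those $\alpha$ whose $\alpha_+$-coset has a representative with vanishing last coordinate, i.e. $v_{\alpha_+}=(v,0)$; alternative (b) forces $\alpha_-\in\ptg$ and selects those whose $\alpha_-$-coset has a representative with last coordinate $2r$, i.e. $v_{\alpha_-}=(v,2r)$. In either case $\alpha\in\ptg_0$, which is precisely the stated description of ${\rm\bf \widehat J}$. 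The final claim is then immediate: in case (a) we have $v_{\alpha_+}=(v_\alpha,0)$, so $v_\alpha=\proj(v_{\alpha_+})$, and in case (b) we have $v_{\alpha_-}=(v_\alpha,2r)$, so $v_\alpha=\proj(v_{\alpha_-})$.

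The only point requiring genuine care is bookkeeping: translating the two $\Gamma$-membership alternatives of Theorem~\ref{teorema-restricao} into statements about $\restr_0$, $\restr_{2r}$ and the cosets $\LL_\sigma$ and $v_{\alpha_\pm}$, and checking that the resulting union of restrictions is closed as a module. Everything else is a direct transcription of the structure already used in the projection case.
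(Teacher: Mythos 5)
Your proof is correct and follows exactly the route the paper intends: the paper gives no written proof, stating only that the proposition is ``a similar and simpler application of Theorem~\ref{teorema-restricao}'', and your argument is precisely that application (specialising to $\alpha=I\!d_n$ for $\widehat\LL$, general $\alpha$ for $\widehat{\rm\bf J}$). Your extra verification that $\restr_0(\LL)\cup\restr_{2r}(\LL_\sigma)$ is closed as a $\ZZ$-module, via the squaring argument of \eqref{sigma2a}, is a correct and welcome addition that the paper leaves implicit.
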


%\input{AGivenL}
%6/v/15 isabel
%9/xi/15 isabel
%14 a 19/vii/17 isabel
% 27/ix/17 isabel
\section{Finding the groups that project into a given lattice}\label{secProjectGeneral}
From now on we specialise to the case $n=2$.
 Suppose that, for some value of the width $y_0$, 
 the  group $\Ltily$  of common periods of $\Pi_{y_0}(X_\Gamma)$ is a given lattice $\MM\subset\plano$.
We list all the possible lattices $\LL$ that lead to this result for some $y_0$ 
and describe what happens for other  values of $y_0$ in each case. 

We  say that a
 lattice $\LL_1$ is {\em rationally compatible} with another lattice $\LL_2$ if there exists $m\ne 0 \in\ZZ$ such that $m\LL_1\subset\LL_2$.
One common situation in the context above is that either $\Ltily$  is rationally compatible with $\MM$ or $\MM$  is rationally compatible with $\Ltily$.

The lattices $\LL$
are described up to symmetries of the form $\alpha_+$, where $\alpha$ is an element of the holohedry of $\MM$.
Thus, we address
the question of how the set of periods $\Ltily$,  and more generally the group of symmetries $\Gtil_{y_0}$ of the projected functions, changes
 with the width $y_0$ of the projection band, knowing that one of the projected patterns has the periods of 
 $\MM$.
We start with a procedure for a general lattice $\MM=\left\{\oa_0,\ob_0\right\}_\ZZ$ and in the next section we specialise to a hexagonal lattice.

We use the structure of the proof of Theorem~\ref{ThLtilModulo}, to obtain special forms for the generators
$\oa$, $\ob$, $\oc$ of $\LL$ according to the following cases:

 \noindent {\bf case 1}.
$\LL\cap\left\{(0,0,z): z\in\RR\right\}=\left\{(0,0,0)\right\}$.
This condition is not compatible with $\sigma\LL\subset\LL$, as shown in the proof of Lemma~\ref{lemay0foraL}.
Hence, by \eqref{LsigmaSemy0}, for all $y_0$ we have $\Ltily=\restr_0(\LL)=\MM$ and
$\LL=\left\{\left(\oa_0,0\right),\left(\ob_0,0\right), \left(\oc_0,c_3\right)\right\}_\ZZ$ for some $\oc_0\in\plano$, $c_3\in\RR$ with $n\oc_0\not\in\MM$ for all $n\in\ZZ$ and $c_3\ne0$.

\bigbreak
For all the other cases we need to establish some notation.
Let $\oc=(0,0,c)\in\LL$, minimal in its direction.
Then for $y_0=nc$, $n\in\ZZ$ we have $\Ltily=\proj(\LL)\cup \proj(\LL_\sigma)$ by \eqref{LsigmaComy0}.
Other values of $y_0$ yield $\Ltily=\restr_0(\LL)\cup \restr_{y_0}(\LL_\sigma)$ by \eqref{LsigmaSemy0}.
The cases below correspond to the different situations with respect to $\sigma$.

We may write the generators of $\LL$ in the form $\oa=\left(\na ,a_3\right)$, $\ob=\left(\nb ,b_3\right)$ and  $\oc=\left(0,0,c\right)$ with $\na ,\nb \in\plano$, $c>0$ and $a_3, b_3\in\left[0,c\right)$.
Then $\sigma\LL\subset\LL$ if and only if $a_3, b_3\in\left\{0,c/2\right\}$.

The number of  generators of $\restr_0(\LL)$ depends on metric properties of the generators of $\LL$ as follows.
Let $D(\LL)$ be the $\ZZ$-module
\begin{equation}\label{defD}
D(\LL)=\{(m,n)\in \ZZ^2: ma_3+nb_3=0 \pmod{c}\}.
\end{equation}
Then 
$$
\restr_0(\LL)=\left\{ m\na +n\nb, \  (m,n)\in D(\LL)\right\} .
$$
Generically, $D(\LL)=\left\{(0,0)\right\}$, but in special situations we may have that the $\ZZ$-module   $D(\LL)$ is either
generated by one non-zero element or by two  linearly independent elements of $\ZZ^2$.
\bigbreak

 \noindent {\bf case 2}.
$\sigma\LL\not\subset\LL$, hence  either $a_3\ne 0$ or $b_3\ne 0$.
Then $\Ltily=\proj(\LL)$ for $y_0=nc$, $n\in\ZZ$.
For other values of $y_0$, we have $\Ltily=\restr_0(\LL)$ and
$\Gtil_{y_0}$ is a subperiodic group, except 
in the special case where $D(\LL)$ has two generators. There are two possibilities:

\noindent {\bf case 2.1} If $\proj(\LL)=\MM$
we may take $\oa=\left(\oa_0,a_3\right)$, $\ob=\left(\ob_0,b_3\right)$ and $\Ltily=\MM$ for $y_0= nc$, $n\in\ZZ$.
Then $\Ltily$ is a proper subset of $\MM$ for other values of $y_0$,.

\noindent {\bf case 2.2} If   $D(\LL)$ has two independent generators, we may also have $\restr_0(\LL)=\MM$.
Then  $\Ltily=\MM$ for $y_0\ne nc$, and $\MM$  is a proper subset of $\Ltily$ for $y_0=nc$.

\bigbreak

 From now on we assume $\sigma\LL\subset\LL$, hence we have $a_3,b_3\in\left\{0,c/2\right\}$, not both equal to $c/2$, as in the proof of Lemma~\ref{lemay0foraL}.
Then $D(\LL)$ always has two independent generators, according to the following table
\begin{center}
\begin{tabular}{c|l|l}
&$ b_3=0$&$ b_3=c/2$\\
\hline
$a_3=0$&$D(\LL)=\ZZ^2$&$D(\LL)=\left\{(1,0),(0,2)\right\}_\ZZ$\\
\hline
$a_3=c/2$&$D(\LL)=\left\{(2,0),(0,1)\right\}_\ZZ$& ---
\end{tabular}
\end{center}
and  $\restr_0(\LL)$ is given by:
\begin{center}
\begin{tabular}{c|l|l}
&$ b_3=0$&$ b_3=c/2$\\
\hline
$a_3=0$&$\restr_0(\LL)=\left\{\na ,\nb \right\}_\ZZ$&$\restr_0(\LL)=\left\{\na ,2\nb \right\}_\ZZ$\\
\hline
$a_3=c/2$&$\restr_0(\LL)=\left\{2\na ,\nb \right\}_\ZZ$& ---
\end{tabular}
\end{center}

\bigbreak

 \noindent {\bf case 3}.
If $\sigma\LL\subset\LL$ and $\LL_\sigma\subset\LL$ (this holds in particular, if $\sigma\not\in\ptg$),
then for   $y_0=nc$, $n\in\ZZ$ we have $\Ltily=\proj(\LL)$.
Other values of $y_0$ yield $\Ltily=\restr_0(\LL)$.
There are three possibilities:

\noindent {\bf case 3.1} If $a_3=b_3=0$ then $D(\LL)=\ZZ^2$, $\restr_0(\LL)=\proj(\LL)=\MM$ for all $y_0$.

 \noindent {\bf case 3.2} If  either $a_3\ne 0$ or $b_3\ne 0$ and if
$\Ltily=\proj(\LL)=\MM$ for  $y_0=nc$, $n\in\ZZ$, then $\na =\oa_0$, $\nb =\ob_0$.
For other values of $y_0$, we have $\Ltily=\restr_0(\LL)$ as in the table above.
 
 \noindent {\bf case 3.3}  If  either $a_3\ne 0$ or $b_3\ne 0$ and if
 $\Ltily=\restr_0(\LL)=\MM$ for  $y_0\ne nc$, $n\in\ZZ$, then
either the generators of $\LL$ are $\left(\oa_0,0\right)$ and $\frac{1}{2}\left(\ob_0,c\right)$ and hence for $y_0=nc$, $n\in\ZZ$, we have
$\Ltily=\proj(\LL)=\left\{\oa_0,\frac{1}{2}\ob_0\right\}_\ZZ$,
or the generators are $\frac{1}{2}\left(\oa_0,c\right)$ and $\left(\ob_0,0\right)$ and hence for $y_0=nc$, $n\in\ZZ$, we have
$\Ltily=\proj(\LL)=\left\{\frac{1}{2}\oa_0,\ob_0\right\}_\ZZ$.

In both cases 3.2 and 3.3, for all values of $y_0$ we obtain that $\Ltily$ is a lattice and that 
 $\MM$ is rationally compatible with $\Ltily$ even when $(0,y_0)\not \in\LL$ and $\sigma\not\in\ptg$.
\bigbreak

 From now on we assume 
 $\left(v_\sigma,\sigma\right)\in\Gamma$ where
$v_\sigma=\left(v_1,y_1\right)\not\in\LL$ with $y_1\in[0,c)$.
 By \eqref{sigma2a} it follows that  $2v_1\in\restr_0(\LL)$.
We obtain the  cases below, depending on the values of $a_3$ and $b_3$.
In all cases, if $\Ltily=\MM$ for some $y_0$, then  $\Ltily$ is rationally compatible with $\MM$ for every $y_0$.

\bigbreak
 
\noindent {\bf case 4}. If $a_3=b_3=0$ and $\left(v_\sigma,\sigma\right)\in\Gamma$ then 
 $v_1\in\left\{0,\frac{1}{2}\na ,\frac{1}{2}\nb ,\frac{1}{2}\left(\na +\nb \right)\right\}$, by the arguments in the proof of Lemma~\ref{lemay0emL}.

\noindent {\bf case 4.1}.
$\LL=\left\{\left(\oa_0,0\right),\left(\ob_00\right),  \left(0,0,c\right)\right\}_\ZZ$.\\
 $v_1=0$, then  $\Ltily=\MM$  for all $y_0$.\\
 $v_1=\frac{1}{2}\oa_0$ then $\Ltily=\left\{\frac{1}{2}\oa_0,\ob_0\right\}_\ZZ$
 for $y_0=nc$ and $y_0=nc+y_1$, $n\in\ZZ$; and $\Ltily=\MM$ for other $y_0$.\\
 $v_1=\frac{1}{2}\ob_0$ then $\Ltily=\left\{\\oa_0,\frac{1}{2}\ob_0\right\}_\ZZ$
 for $y_0=nc$ and $y_0=nc+y_1$, $n\in\ZZ$; and $\Ltily=\MM$ for other $y_0$.\\
$v_1=\frac{1}{2}\left(\oa_0+\ob_0\right)$ then $\Ltily=\left\{\oa_0,\frac{1}{2}\left(\oa_0+\ob_0\right)\right\}_\ZZ$
 for $y_0=nc$ and $y_0=nc+y_1$, $n\in\ZZ$; and $\Ltily=\MM$ for other $y_0$.

\noindent {\bf case 4.2}. $v_1=\oa_0=\frac{1}{2}\na$, and 
$\LL=\left\{\left(2\oa_0,0\right),\left(\ob_0,0\right),  \left(0,0,c\right)\right\}_\ZZ$. 
Then $\Ltily=\MM$  for $y_0=nc$ and $y_0=nc+y_1$, $n\in\ZZ$; and $\Ltily=\left\{ 2\oa_0,\ob_0\right\}$ for other $y_0$.

\noindent {\bf case 4.3}. $v_1=\ob_0=\frac{1}{2}\nb$ or $v_1=\ob_0=\frac{1}{2}\left(\na+\nb\right)$, and 
$\LL=\left\{\left(\oa_0,0\right),\left(2\ob_0,0\right),  \left(0,0,c\right)\right\}_\ZZ$.
Then $\Ltily=\MM$  for $y_0=nc$ and $y_0=nc+y_1$, $n\in\ZZ$; and $\Ltily=\left\{ \oa_0,2\ob_0\right\}$ for other $y_0$.

\bigbreak

 \noindent {\bf case 5}.
 If $a_3=c/2$ and $b_3=0$ and $\left(v_\sigma,\sigma\right)\in\Gamma$ then 
$v_1\in\left\{0,\na ,\frac{1}{2}\nb ,\na +\frac{1}{2}\nb \right\}$, by the arguments in the proof of Lemma~\ref{lemay0emL}. 
There are several possibilities for $\Ltily$:

 \noindent 
For $y_0=nc$, $n\in\ZZ$, $\Ltily=\proj(\LL)+\proj(\LL_\sigma)=\proj(\LL)+v_1=\left\{\na,\nb\right\}_\ZZ\cup\left( \left\{\na,\nb\right\}_\ZZ+v_1\right)$.

 \noindent 
For $y_0= nc+y_1$, $n\in\ZZ$,  then $\Ltily=\restr_0(\LL)\cup \restr_{y_0}(\LL_\sigma)=\{2\na ,\nb\}_\ZZ\cup \left(\{2\na ,\nb\}_\ZZ+v_1\right)$.

 \noindent 
For $y_0= nc+\frac{1}{2}c+y_1$, $n\in\ZZ$,  then $\Ltily=\restr_0(\LL)\cup \restr_{y_0}(\LL_\sigma)=\{2\na ,\nb\}_\ZZ\cup \left(\{\na ,\nb\}_\ZZ+v_1\right)$.

 \noindent 
For all other values of $y_0$ we get $\Ltily=\restr_0(\LL)=\left\{2\na,\nb\right\}_\ZZ$.

\noindent
In all cases, if $\Ltily=\MM$ for some $y_0$, then  $\Ltily$ is rationally compatible with $\MM$ for every $y_0$.

 If $a_3=0$ and $b_3=c/2$ then the results are obtained from this case by interchanging $a$ and $b$.
The possibilities  with $a_3=c/2$ and $b_3=0$ are:

 \noindent {\bf case 5.1}.  
 $\na=\oa_0$, $\nb=\ob_0$ with $v_1\in \{0, \oa_0\}$.
 
 For $v_1=0$, $\Ltily=\MM$ for $y_0= nc$ and $y_0= nc+\frac{1}{2}c+y_1$, $n\in\ZZ$; and $\Ltily=\left\{2\oa_0,\ob_0\right\}_\ZZ$ for all other values of $y_0$. 
 
 For $v_1=\oa_0$, $\Ltily=\MM$ for $y_0= nc$ and $y_0= nc+y_1$, $n\in\ZZ$; and $\Ltily=\left\{2\oa_0,\ob_0\right\}_\ZZ$ for all other values of $y_0$. 
  
 \noindent {\bf case 5.2}. 
 $\na=\oa_0$, $\nb=2\ob_0$ with  $v_1\in\{\frac{1}{2}\ob_0,\oa_0+\frac{1}{2}\ob_0\}$, then $\Ltily=\MM$  for $y_0= nc$,  $n\in\ZZ$.
 
For  $v_1=\frac{1}{2}\ob_0$ then $\Ltily =\left\{2\oa_0,\frac{1}{2}\ob_0\right\}_\ZZ$ when $y_0= nc+y_1$, $n\in\ZZ$; and 
   $\Ltily =\left\{2\oa_0,\oa_0+\frac{1}{2}\ob_0\right\}_\ZZ$ for  $y_0= nc+\frac{1}{2}c+y_1$, $n\in\ZZ$.
 For all other values of $y_0$, we have
$\Ltily=\left\{2\oa_0,2\ob_0\right\}_\ZZ$. 
   
   For $v_1=\oa_0+\frac{1}{2}\ob_0$ then $\Ltily =\left\{2\oa_0,\oa_0+\frac{1}{2}\ob_0\right\}_\ZZ$ for $y_0= nc+y_1$, $n\in\ZZ$; and  $\Ltily =\left\{2\oa_0,\frac{1}{2}\ob_0\right\}_\ZZ$ for  $y_0= nc+\frac{1}{2}c+y_1$, $n\in\ZZ$.
   For all other values of $y_0$, we have
   $\Ltily =\left\{2\oa_0,2\ob_0\right\}_\ZZ$. 
 
\noindent {\bf case 5.3}.
$\na=\oa_0-\frac{1}{2}\ob_0$, $\nb=\ob_0$ with  $v_1\in\{\frac{1}{2}\ob_0,\oa_0\}$.

If $v_1=\frac{1}{2}\ob_0$ then $\Ltily=\MM$ for  $y_0= nc+\frac{1}{2}c+y_1$. Other values of $y_0$ yield
$\Ltily=\left\{\oa_0,\frac{1}{2}\ob_0\right\}_\ZZ$ for  $y_0= nc$, $n\in\ZZ$; and
$\Ltily =\left\{2\oa_0,\frac{1}{2}\ob_0\right\}_\ZZ$ for $y_0= nc+y_1$, $n\in\ZZ$.
All other values of $y_0$ correspond to  $\Ltily =\left\{2\oa_0,\ob_0\right\}_\ZZ$. 

If $v_1=\oa_0$ then $\Ltily=\MM$ for  $y_0= nc+y_1$. Other values of $y_0$ yield
$\Ltily=\left\{\oa_0,\frac{1}{2}\ob_0\right\}_\ZZ$ for  $y_0= nc$, $n\in\ZZ$; and
$\Ltily =\left\{2\oa_0,\frac{1}{2}\ob_0\right\}_\ZZ$ for $y_0= nc+\frac{1}{2}c+y_1$, $n\in\ZZ$.
All other values of $y_0$ correspond to  $\Ltily =\left\{2\oa_0,\ob_0\right\}_\ZZ$. 

\noindent {\bf case 5.4}.
$\na=\frac{1}{2}\oa_0$, $\nb=2\ob_0$,  $v_1\in\{\ob_0,\frac{1}{2}\oa_0+\ob_0\}$.

For $v_1=\ob_0$, then $\Ltily=\MM$  for $y_0= nc+y_1$.
Also, $\Ltily=\left\{\frac{1}{2}\oa_0,\ob_0\right\}_\ZZ$ for  $y_0= nc$; and
$\Ltily=\left\{\oa_0,\frac{1}{2}\oa_0+\ob_0\right\}_\ZZ$ for  $y_0= nc+\frac{1}{2}c+y_1$,  $n\in\ZZ$; and
$\Ltily=\left\{\oa_0,2\ob_0\right\}_\ZZ$ for  all other values of $y_0$. 

If $v_1=\frac{1}{2}\oa_0+\ob_0$, then $\Ltily=\MM$  for $y_0= nc+\frac{1}{2}c+y_1$,  $n\in\ZZ$.
In this case, $\Ltily=\left\{\frac{1}{2}\oa_0,\ob_0\right\}_\ZZ$ for  $y_0= nc$; and
$\Ltily=\left\{\oa_0,\frac{1}{2}\oa_0+\ob_0\right\}_\ZZ$ for  $y_0= nc+y_1$,  $n\in\ZZ$; and
$\Ltily=\left\{\oa_0,2\ob_0\right\}_\ZZ$ for  all other values of $y_0$.

\noindent {\bf case 5.5}.
$\na=\frac{1}{2}\oa_0$, $\nb=\ob_0$,  $v_1\in\{0,\frac{1}{2}\oa_0,\frac{1}{2}\ob_0,2\oa_0+\frac{1}{2}\ob_0\}$.

For $v_1=0$ then  $\Ltily=\left\{\frac{1}{2}\oa_0,\ob_0\right\}_\ZZ$ for  $y_0= nc$ or $y_0= nc+\frac{1}{2}c+y_1$; and $\Ltily=\MM$ for other values of $y_0$ including  $y_0= nc+y_1$,  $n\in\ZZ$.

For $v_1=\frac{1}{2}\oa_0$ then  $\Ltily=\left\{\frac{1}{2}\oa_0,\ob_0\right\}_\ZZ$ for  $y_0= nc$ or $y_0= nc+y_1$; and $\Ltily=\MM$  for other values of $y_0$ including   $y_0= nc+\frac{1}{2}c+y_1$,  $n\in\ZZ$.

For $v_1=\frac{1}{2}\ob_0$ then  $\Ltily=\left\{\frac{1}{2}\oa_0,\frac{1}{2}\ob_0\right\}_\ZZ$ for  $y_0= nc$; 
$\Ltily=\left\{\oa_0,\frac{1}{2}\ob_0\right\}_\ZZ=\MM$ for  $y_0= nc+y_1$; 
and $\Ltily=\left\{\oa_0,\frac{1}{2}(\oa_0+\ob_0)\right\}_\ZZ=\MM$ for  $y_0= nc+\frac{1}{2}c+y_1$,  $n\in\ZZ$; and $\Ltily=\MM$  for other values of $y_0$.

For $v_1=2\oa_0+\frac{1}{2}\ob_0$ then  $\Ltily=\left\{\frac{1}{2}\oa_0,\frac{1}{2}\ob_0\right\}_\ZZ$ for  $y_0= nc$; and $\Ltily=\left\{\oa_0,\frac{1}{2}(\oa_0+\ob_0)\right\}_\ZZ$ for  $y_0= nc+y_1$; 
and $\Ltily=\left\{\oa_0,\frac{1}{2}\ob_0 \right\}_\ZZ$ for  $y_0= nc+\frac{1}{2}c+y_1$,  $n\in\ZZ$; and $\Ltily=\MM$  for other values of $y_0$.

 \bigbreak
 
 The cases above are all the possibilities for $\Ltily=\MM$.
 \bigbreak
 
Note that from cases {\bf 1}, {\bf 3.1} and {\bf 4} it follows that if 
$\LL=\left\{\left(\oa_0,0\right),\left(\ob_0,0\right), \oc\right\}_\ZZ$ for some $\oc\in\RR^3$ then $\Ltily=\MM$ for all $y_0$, except in the cases  $\left(\left(v_1,y_1\right),\sigma\right)\in\Gamma$ 
with
 $v_1\in\left\{\frac{1}{2}\oa_0 ,\frac{1}{2}\ob_0 ,\frac{1}{2}\left(\oa_0 +\ob_0 \right)\right\}$.
 In all other cases we get different results for $\Ltily$ depending on $y_0$.

%\input{Aexample}

%19/vii/17 isabel
\section{Example --- projection into a hexagonal lattice}\label{SecProjectionToHex}
We specialise further, and suppose that, for some value of the width $y_0$, we have 
$$
\Ltily = \left\{\left(1,0\right),\left(\frac{1}{2},\frac{\sqrt{3}}{2}\right)\right\}_\ZZ=
 \left\{{\rm\bf h}_1,{\rm\bf h}_2\right\}_\ZZ=
\hex
$$
that we call the {\it standard hexagonal lattice}. We list all the possible lattices $\LL$ that lead to this result for some $y_0$ and describe what happens for other  values of $y_0$. The lattices are described up to symmetries of the form $\alpha_+$ where $\alpha$ belongs to the holohedry of $\hex$.
This particular example is interesting because of its connection to special patterns discussed in \cite{CLO,Gomes}.

We follow the numbering of cases of Section~\ref{secProjectGeneral} to identify the lattices $\LL$.
The projected lattices $\Ltily$ appear in Tables~\ref{Tabela2}--\ref{Tabela5}, except for cases {\bf 2.1}. and {\bf 2.2}. where details are given below.

\noindent {\bf case 1}.
 $\LL=\left\{\left(1,0,0\right),\left(\frac{1}{2},\frac{\sqrt{3}}{2},0\right), \left(\oc_0,c \right)\right\}_\ZZ$ for some $n\oc_0\not\in\hex$  for all $n\in\ZZ$ and  
$c\in \RR\backslash\{0\}$. Then $\Ltily=\hex$ for all $y_0$.

\noindent {\bf case 2}. $\sigma\LL\not\subset\LL$

\noindent {\bf case 2.1}.
$\LL=\left\{\left(1,0,a_3\right),\left(\frac{1}{2},\frac{\sqrt{3}}{2},b_3\right), \left(0,0,c \right)\right\}_\ZZ$ for some $c\ne 0$; and
$\sigma\LL\not\subset\LL$, hence  either $a_3\ne 0$ or $b_3\ne 0$.
Then for $y_0=nc$, $n\in\ZZ$ we have $\Ltily=\hex$. For other values of $y_0$, it is $\Ltily=\restr_0(\LL)\subsetneq\hex$, depending on 
$$
D(\LL)=\{(m_1,m_2)\in\ZZ^2: m_1a_3+m_2b_3=0 \: \pmod{b}\}.
$$
For instance, if $D(\LL)=\{(m_1,m_2)\}_\ZZ$ then $\restr_0(\LL)=\left\{m_1\left(1,0\right)+m_2\left(\frac{1}{2},\frac{\sqrt{3}}{2}\right)\right\}_\ZZ$.

\noindent {\bf case 2.2}. 
If $\sigma\LL\not\subset\LL$ and $(0,0,c)\in\LL$, $c\ne 0$, is minimal in its direction, then for $y_0=nc$, $n\in\ZZ$ we have $\Ltily=\proj(\LL)$, whereas for other values of $y_0$, it is $\Ltily=\restr_0(\LL)$.
If, moreover,
$D(\LL)=\left\{\left(m_1,m_2\right),\left(n_1,n_2\right)\right\}_\ZZ$,  for some linearly independent 
$\left(m_1,m_2\right),\left(n_1,n_2\right)\in\ZZ^2$,
then we may have  $\hex\subsetneq\Ltily$ for $y_0=nc$, $n\in\ZZ$ and  $\Ltily=\hex$ for the remaining $y_0$.
In the first case,
$\LL=\left\{\left(\na ,a_3\right), \left(\nb ,b_3\right), (0,0,c)\right\}_\ZZ$ with $c>0$, $a_3, b_3\in\left[0,c\right)$
and $\na ,\nb \in\plano$ satisfy $m_1\na+m_2\nb={\rm\bf h}_1$ and $n_1\na+n_2\nb={\rm\bf h}_2$.

\noindent {\bf case 3}.
$\sigma\LL\subset\LL$ and $\LL_\sigma\subset\LL$.

\noindent {\bf case 3.1}.
 $\LL=\left\{\left(1,0,0\right),\left(\frac{1}{2},\frac{\sqrt{3}}{2},0\right),  \left(0,0,c\right)\right\}_\ZZ$.

\noindent {\bf case 3.2}.
 $\LL=\left\{\left(1,0,\frac{c}{2}\right),\left(\frac{1}{2},\frac{\sqrt{3}}{2},0\right),  \left(0,0,c\right)\right\}_\ZZ$.

\noindent {\bf case 3.3}.
 $\LL=\left\{\left(\frac{1}{2},0,\frac{c}{2}\right),\left(\frac{1}{2},\frac{\sqrt{3}}{2},0\right),  \left(0,0,c\right)\right\}_\ZZ$.

\noindent {\bf case 4}.
 $\LL=\left\{\left(\na,0\right),\left(\nb,0\right),  \left(0,0,c\right)\right\}_\ZZ$
and $\left(\left(v_1,y_1\right),\sigma\right)\in\Gamma$, with 
$v_1\in\left\{ 0, \frac{1}{2}\na,  \frac{1}{2}\nb,  \frac{1}{2}(\na+\nb)\right\}$.

\noindent {\bf case 4.1}.
$\LL=\left\{\left(1,0,0\right),\left(\frac{1}{2},\frac{\sqrt{3}}{2},0\right),  \left(0,0,c\right)\right\}_\ZZ$.\\

\noindent {\bf case 4.2}. $v_1=\left(\frac{1}{2},0\right)$, and 
$\LL=\left\{\left(1,0,0\right),\left(\frac{1}{2},\frac{\sqrt{3}}{2},0\right),  \left(0,0,c\right)\right\}_\ZZ$. 

\noindent {\bf case 4.3}. $v_1=\left(\frac{1}{2},\frac{\sqrt{3}}{2}\right)$, and 
$\LL=\left\{\left(1,0,0\right),\left(1,\sqrt{3},0\right),  \left(0,0,c\right)\right\}_\ZZ$.

\noindent {\bf case 5}. 
 $\LL=\left\{\left(\na,\frac{c}{2}\right),\left(\nb,0\right),  \left(0,0,c\right)\right\}_\ZZ$ (without loss of generality)\\
and $\left(\left(v_1,y_1\right),\sigma\right)\in\Gamma$, 
 with 
$v_1\in\left\{ 0, \frac{1}{2}\na,  \frac{1}{2}\nb,  \frac{1}{2}(\na+\nb)\right\}$.

\noindent {\bf case 5.1}. 
$\LL=\left\{\left(1,0,\frac{c}{2}\right),\left(\frac{1}{2},\frac{\sqrt{3}}{2},0\right),  \left(0,0,c\right)\right\}_\ZZ$.

\noindent {\bf case 5.2}. 
$\LL=\left\{\left(1,0,\frac{c}{2}\right),\left(1,\sqrt{3},0\right),  \left(0,0,c\right)\right\}_\ZZ$.

\noindent {\bf case 5.3}. 
$\LL=\left\{\left(\frac{3}{4},-\frac{\sqrt{3}}{4},\frac{c}{2}\right),\left(\frac{1}{2},\frac{\sqrt{3}}{2},0\right),  \left(0,0,c\right)\right\}_\ZZ$.

\noindent {\bf case 5.4}. 
$\LL=\left\{\left(\frac{1}{2},0,\frac{c}{2}\right),\left(1,\sqrt{3},0\right),  \left(0,0,c\right)\right\}_\ZZ$.

\noindent {\bf case 5.5}. 
$\LL=\left\{\left(\frac{1}{2},0,\frac{c}{2}\right),\left(\frac{1}{2},\frac{\sqrt{3}}{2},0\right),  \left(0,0,c\right)\right\}_\ZZ$.

\bigbreak

The particular lattices $\Ltil\subset\RR^2$ that appear in this section are listed in Table~\ref{TableLtil}.

 \begin{table}[hht]
\begin{center}
\begin{tabular}{ccll}
$\oa$	&	$\ob$	&	name/type	&	symbol	\\ \hline
$\left(1,0\right)$	&	$\left(\frac{1}{2},\frac{\sqrt{3}}{2}\right)$	&	hexagonal	&	$\hex$	\\ \hline
$\left(\frac{1}{2},0\right)$	&	$\left(0,\frac{\sqrt{3}}{2}\right)$	&	rectangular I	&	rec I	\\
$\left(0,\frac{\sqrt{3}}{2}\right)$	&	$\left(1,0\right)$	&	rectangular II	&	rec II	\\
$\left(\frac{1}{4},\frac{\sqrt{3}}{4}\right)$	&	$\left(\frac{3}{2},-\frac{\sqrt{3}}{2}\right)$	&	rectangular III	&	rec III	\\ \hline
$\left(\frac{1}{4},\frac{\sqrt{3}}{4}\right)$	&	$\left(-\frac{3}{4},\frac{\sqrt{3}}{4}\right)$	&	rotated rectangular I	&	$R_{\pi/3}$ rec I	\\
$\left(\frac{1}{4},-\frac{\sqrt{3}}{4}\right)$	&	$\left(\frac{3}{4},\frac{\sqrt{3}}{4}\right)$	&	rotated rectangular I	&	$R_{-\pi/3}$ rec I	\\	\hline
\end{tabular}
 \end{center}
 \caption{Generators $\oa$, $\ob$ of lattices $\Ltily=\left\{\oa,\ob\right\}_\ZZ$ appearing as projections.
 The last two correspond to rotations of  rec I by an angle $\pi/3$, in opposite directions.\label{TableLtil}}
 \end{table}

For each of the cases above, we describe in Tables~\ref{Tabela2}--\ref{Tabela5} the lattices $\Ltily$ corresponding to a set of values of the width $y_0$ of the projection band. The relevant sets are 
$$
A=\left\{nc: n \in \NN \right\} \qquad
B=\left\{y_1+nc: n \in \NN \right\} \qquad
C=\left\{y_1+\frac{c}{2}+nc: n \in \NN \right\}
$$
where the value of $c$ correspondes to the generator $(0,0,.c)$ of $\LL$. We indicate by ``${\mathcal O}$'' ({\em other values}) 
the set of all values of $y_0\in \RR^+$ outside the union of the other sets in the same column.

\begin{table}[htt]
\begin{center}
\begin{tabular}{r||c|c|c|c|c|c}
$\Ltily\backslash$ cases	&	1	&	2.1	&	2.2	&	3.1	&	3.2	&	3.3	\\ \hline\hline
$\hex$	&	$\RR^+$	&	$A$	&	$\OO$	&	$\RR^+$	&	$A$	&	$\OO$	\\
rec I	&	 ---	&	 ---	&	 ---	&	 ---	&	 ---	&	$A$	\\
2 $R_{\pi/3}$ rec I	&	 ---	&	 ---	&	 ---	&	 ---	&	$\OO$	&	 ---	\\
sublattice	&	 ---	&	$\OO$	&	 ---	&	 ---	&	 ---	&	 ---	\\
superlattice	&	 ---	&	 ---	&	$A$	&	 ---	&	 ---	&	 ---		
\end{tabular}	
\end{center}
\caption{Lattices $\Ltily$ in the projection for  cases 1--3.3 above, with the conventions of Table~\ref{TableLtil}. We  indicate by ``sublattice'' a submodule of the  hexagonal lattice $\hex$, either $\{(0,0)\}$ or with  one or two generators. By ``superlattice'' we mean a lattice strictly containing $\hex$ as a submodule.
Also ``2 $R_{\pi/3}$ rec I'' stands for the lattice $\left\{2\oa,2\ob\right\}_\ZZ$ 
where $\oa$, $\ob$ are generators for ``$R_{\pi/3}$ rec I'', the rotated rectangular lattice. \label{Tabela2}}
\end{table}

\begin{table}[htt]
\begin{center}
\begin{tabular}{r||c|c|c|c|c|c} 
cases	&	4.1	&	4.1	&	4.1	&	4.1	&	4.2	&	4.3	\\
$\Ltily\backslash$ $v_1$	&	$\left(0,0\right)$	&	$\left(\frac{1}{2},0\right)$	&	$\left(\frac{1}{4},\frac{\sqrt{3}}{4}\right)$	&	$\left(\frac{3}{2},\frac{\sqrt{3}}{2}\right)$	&	$\left(\frac{1}{2},0\right)$	&	$\left(\frac{1}{2},\frac{\sqrt{3}}{2}\right)$	\\ \hline \hline
$\hex$	&	$\RR^+$	&	$\OO$	&	$\OO$	&	$\OO$	&	$A\cup B$	&	$A\cup B$	\\
rec I	&	 ---	&	$A\cup B$	&	 ---	&	 ---	&	 ---	&	 ---	\\
2 rec I	&	 ---	&	 ---	&	 ---	&	$A\cup B$	&	 ---	&	$\OO$	\\
$R_{\pi/3}$ rec I	&	 ---	&	 ---	&	$A\cup B$	&	 ---	&	 ---	&	 ---	\\
2$R_{\pi/3}$ rec I	&	 ---	&	 ---	&	 ---	&	 ---	&	$\OO$	&	 ---	
\end{tabular}	
\end{center}
\caption{Lattices $\Ltily$ in the projection for  case 4 above, with the conventions of Table~\ref{TableLtil}. We  indicate by ``2 $R_{\pi/3}$ rec I''  the lattice $\left\{2\oa,2\ob\right\}_\ZZ$ 
where $\oa$, $\ob$ are generators for ``$R_{\pi/3}$ rec I'', the rotated rectangular lattice, with a similar convention for the other lattices. \label{Tabela3}}
\end{table}

\begin{table}[htt]
\begin{center}
\begin{tabular}{r||c|c|c|c|c|c}
\phantom{$\Ltily\backslash$} cases	&	5.1	&	5.1	&	5.2	&	5.2	&	5.3	&	5.3	\\
$\Ltily\backslash$ $v_1$	&	$\left(0,0\right)$	&	$\left(1,0\right)$	&	$\left(\frac{1}{2},\frac{\sqrt{3}}{2}\right)$	&	$\left(\frac{3}{2},\frac{\sqrt{3}}{2}\right)$	&	$\left(\frac{1}{4},\frac{\sqrt{3}}{4}\right)$	&	$\left(1,0\right)$	\\ \hline \hline
$\hex$	&	$A\cup C$	&	$A\cup B$	&	$A$	&	$A$	&	$C$	&	$B$	\\
$2\hex$	&	 ---	&	 ---	&	$\OO$	&	$\OO$	&	 ---	&	 ---	\\
rec III	&	 ---	&	 ---	&	 ---	&	 ---	&	$B$	&	$C$	\\
$R_{\pi/3}$ rec I	&	 ---	&	 ---	&	 ---	&	 ---	&	$A$	&	$A$	\\
$2 R_{\pi/3}$ rec I	&	 ---	&	 ---	&	$C$	&	$B$	&	 ---	&	 ---	\\
$R_{-\pi/3}$ rec I	&	$\OO$	&	$\OO$	&	$B$	&	$C$	&	$\OO$	&	$\OO$	
\end{tabular}	
\end{center}
\caption{Lattices $\Ltily$ in the projection for  cases 5.1--5.3 above, with the conventions of Table~\ref{TableLtil}. We  indicate by  ``2 $R_{\pi/3}$ rec I''   the lattice $\left\{2\oa,2\ob\right\}_\ZZ$ 
where $\oa$, $\ob$ are generators for ``$R_{\pi/3}$ rec I'', the rotated rectangular lattice, with a similar convention for the other lattices. \label{Tabela4}}
\end{table}

\begin{table}[htt]
\begin{center}
\begin{tabular}{r||c|c|c|c|c|c}
\phantom{$\Ltily\backslash$} cases	&	5.4	&	5.4	&	5.5	&	5.5	&	5.5	&	5.5	\\
$\Ltily\backslash$ $v_1$	&	$\left(\frac{1}{2},\frac{\sqrt{3}}{2}\right)$	&	$\left(1,\frac{\sqrt{3}}{2}\right)$	&	$\left(0,0\right)$	&	$\left(\frac{1}{2},0\right)$	&	$\left(\frac{1}{4},\frac{\sqrt{3}}{4}\right)$	&	$\left(\frac{3}{4},\frac{\sqrt{3}}{4}\right)$	\\ \hline \hline
$\hex$	&	$B$	&	$C$	&	$\OO$	&	$\OO$	&	$\OO$	&	$\OO$	\\
$\frac{1}{2} \hex$ 	&	 ---	&	 ---	&	 ---	&	 ---	&	$A$	&	$A$	\\
rec I	&	$A$	&	$A$	&	$A\cup C$	&	$A\cup B$	&	 ---	&	 ---	\\
2 rec I	&	$\OO$	&	$\OO$	&	 ---	&	 ---	&	 ---	&	 ---	\\
rec II	&	$C$	&	$B$	&	 ---	&	 ---	&	 ---	&	 ---	\\
$R_{\pi/3}$ rec I	&	 ---	&	 ---	&	 ---	&	 ---	&	$B$	&	$C$	\\
$R_{-\pi/3}$ rec I	&	 ---	&	 ---	&	 ---	&	 ---	&	$C$	&	$B$	
\end{tabular}	
\end{center}
\caption{Lattices $\Ltily$ in the projection for  cases 5.4 and 5.5 above, with the conventions of Table~\ref{TableLtil}. We  indicate by  ``$\frac{1}{2} \hex$'' the lattice $\left\{\frac{1}{2}\oa,\frac{1}{2}\ob\right\}_\ZZ$ 
where $\oa$, $\ob$ are generators for $\hex$, with a similar convention for the other lattices. \label{Tabela5}}
\end{table}

\section{Discussion of the example}\label{SecDiscussion}
For most lattices, only  some special projection widths yield a hexagonal lattice.
The cases where for all the projection widths there is a  three-fold rotation in the group of $\Pi_{y_0}(X_\Gamma)$ have been discussed in \cite{Oliveira}. 
These correspond to the primitive, the body-centered and face-centered cubic lattices, the rhombohedral lattice and the hexagonal lattice in 3 dimensions, always in special positions. 
In this section we compare this to our results.

Inspection of Tables~\ref{TableLtil}--\ref{Tabela5} shows that the projection is the hexagonal lattice for all the band widths in cases 1 and 3.1,  in case 4.1 with $v_1=(0,0)$, and in some situations in case 2.

The three dimensional hexagonal lattice corresponds to case 3.1, if the point group of $\Gamma$ does not contain  $\sigma$,  it corresponds to case 4.1 with $v_1=\left(0,0\right)$ if $\sigma\in \ptg$.

The other four possibilities correspond to special situations in case 2, where the set $D(\LL)$ has two independent generators, so $\restr_0(\LL)$ is a two-dimensional hexagonal lattice. Details are given in Tables~\ref{TabelaCompara} and \ref{TabelaCaso2}.

\begin{table}[htt]
\begin{center}
\begin{tabular}{l|l|l}
$\LL$	&	$\LL$	&	$D(\LL)$ 	\\
lattice	&	generators	&	generators	\\ \hline
rhombohedral	&	$\left(1,0,1\right)$,  $\left(\frac{-1}{2},\frac{-\sqrt{3}}{2},1\right)$, $\left(0,0,3\right)$	&	$\left(2,1\right)$, $\left(1,2\right)$	\\
primitive cubic	&	$\left(1,0,0\right)$, $\left(\frac{1}{2},\frac{\sqrt{3}}{6},\frac{-\sqrt{6}}{6}\right)$, $\left(0,0,\frac{\sqrt{6}}{2}\right)$	&	$\left(0,3\right)$, $\left(1,3\right)$	\\
body centered cubic	&	$\left(1,0,0\right)$, $\left(\frac{1}{2},\frac{\sqrt{3}}{6},\frac{-\sqrt{6}}{12}\right)$, $\left(0,0,\frac{\sqrt{6}}{4}\right)$	&	$\left(0,3\right)$, $\left(1,3\right)$	\\
face centered cubic	&	$\left(1,0,0\right)$, $\left(\frac{1}{2},\frac{\sqrt{3}}{6},\frac{\sqrt{6}}{3}\right)$, $\left(0,0,\sqrt{6}\right)$	&	$\left(0,3\right)$, $\left(1,3\right)$	
\end{tabular}	
\end{center}
\caption{Lattices in case 2 that project into a hexagonal lattice for all projection widths\label{TabelaCompara}}
\end{table}

\begin{table}[htt]
\begin{center}
\begin{tabular}{l|l|l}
$\LL$	&	$\proj(\LL)$	&	$\restr_0(\LL)$	\\
lattice	&	generators	&	generators	\\ \hline
rhombohedral	&	 $\left(1,0\right)$, $\left(\frac{1}{2},\frac{\sqrt{3}}{2}\right)$ 	&	$\left(\frac{3}{2},\frac{\sqrt{3}}{2}\right)$, $\left(\frac{3}{2},\frac{-\sqrt{3}}{2}\right)$	\\
all cubic	&	$\left(\frac{1}{2},\frac{\sqrt{3}}{6}\right)$, $\left(\frac{1}{2},\frac{-\sqrt{3}}{6}\right)$	&	 $\left(1,0\right)$, $\left(\frac{1}{2},\frac{\sqrt{3}}{2}\right)$ 	\end{tabular}	
\end{center}
\caption{In case 2 we have $\sigma\LL\not\subset\LL$, hence $\Ltily=\proj(\LL)$ if $\left(0,y_0\right)\in\LL$ and $\Ltily=\restr_0(\LL)$ otherwise. In these special cases, projection always yields a hexagonal lattice, but the size of the cell varies.\label{TabelaCaso2}}
\end{table}

\begin{figure}[hht]
\includegraphics[scale=0.5]{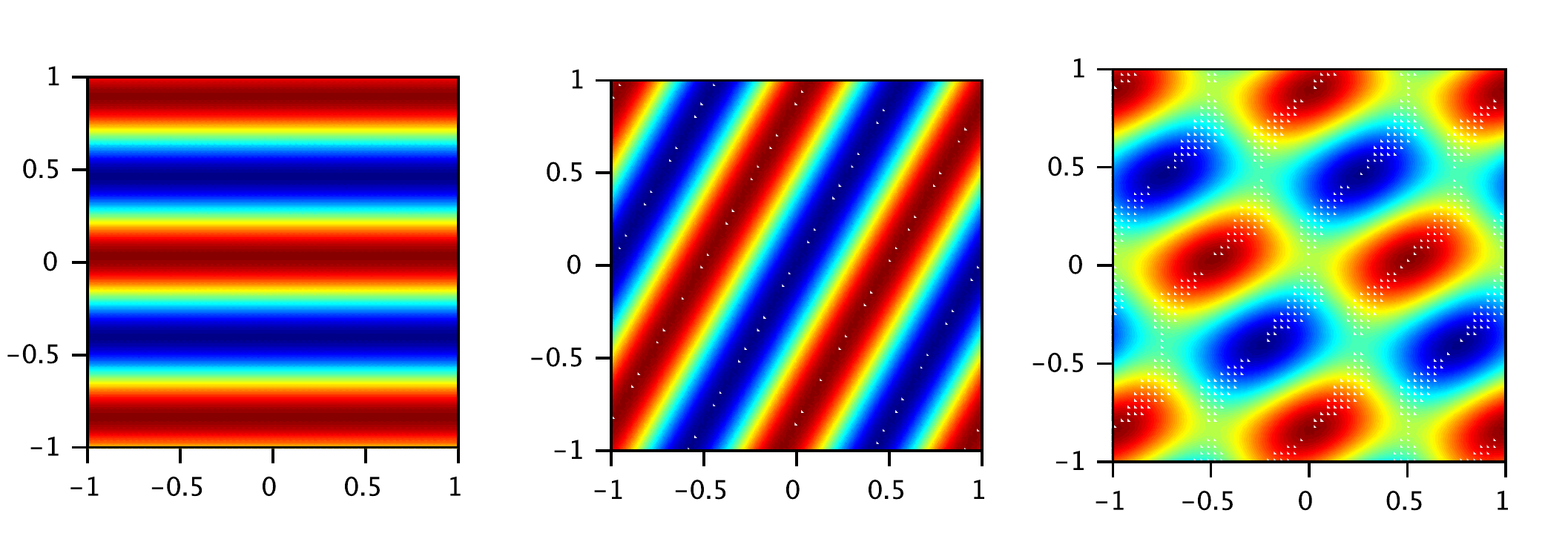}\\
\caption{Level sets for  projections of functions periodic under the  triclinic lattice $\LL_T$ without symmetry (case I), for $y_0=\frac{\sqrt{2}}{14}$.
Left to right:  level sets for the projections of  $\omega_{k_2}$, of $\omega_{k_3}$ and of $\omega_{k_2}+10\omega_{k_3}$.
The only common symmetry of the patterns is the $\Ltily$-periodicity.
}
\label{FigTriclinicNoSymmetry}
\end{figure}

The triclinic lattices in case 1 do not appear on the list in \cite{Oliveira} because their symmetry group does not contain any rotation. 
Generically, the holohedry of a lattice in case 1 is trivial.
An example of projection of invariant functions is given in Figure~\ref{FigTriclinicNoSymmetry} for 
$$
\LL_T=\left\{ \left(1,0,0\right),  \left(\frac{1}{2},\frac{\sqrt{3}}{2},0\right),  \left(1,1,1\right)\right\}_\ZZ
$$
with 
$$
\Ld_T=\left\{ k_1=\left(0,0,1\right),  k_2=\left(0,\frac{2\sqrt{3}}{3},\frac{-2\sqrt{3}}{3}\right),  k_3=\left(1,\frac{-\sqrt{3}}{3},\frac{\sqrt{3}}{3}-1\right)\right\}_\ZZ.
$$
The lattice of periods of the projected functions has holohedry $D_6$, but the $\frac{2\pi}{6}$ rotation is not a symmetry of the projected functions.

In the special case when  $\oc_0$ is parallel to a mirror of $\hex$, the holohedry of a triclinic lattice in case I may contain a reflection. 
An example is the lattice 
$$
\LL_S=\left\{ \left(1,0,0\right),  \left(\frac{1}{2},\frac{\sqrt{3}}{2},0\right),  \left(\frac{\sqrt{2}}{2},0,1\right)\right\}_\ZZ
$$
with 
$$
\Ld_S=\left\{ \ell_1=\left(0,0,1\right),  \ell_2=\left(0,\frac{2\sqrt{3}}{3},0\right),  \ell_3=\left(1,\frac{-\sqrt{3}}{3},\frac{-\sqrt{2}}{2}\right)\right\}_\ZZ ,
$$
that is symmetric under the reflection $\gamma_y$ on the  plane orthogonal to $\left(0,1,0\right)$.
The $\LL_S$-periodic functions $f_{\ell_3}(x,y,z)=\omega_{\ell_3}(x,y,z)+\omega_{\gamma_y\ell_3}(x,y,z)$, and
 $f_{\ell_2}(x,y,z)=\omega_{\ell_2}(x,y,z)+\omega_{\gamma_y\ell_2}(x,y,z)$ are also $\gamma_y$-invariant, and hence their projections are also invariant for the reflection on the $\left(x,0\right)$ axis, as in Figure~\ref{FigTriclinicSymmetric}.

\begin{figure}
\includegraphics[scale=0.5]{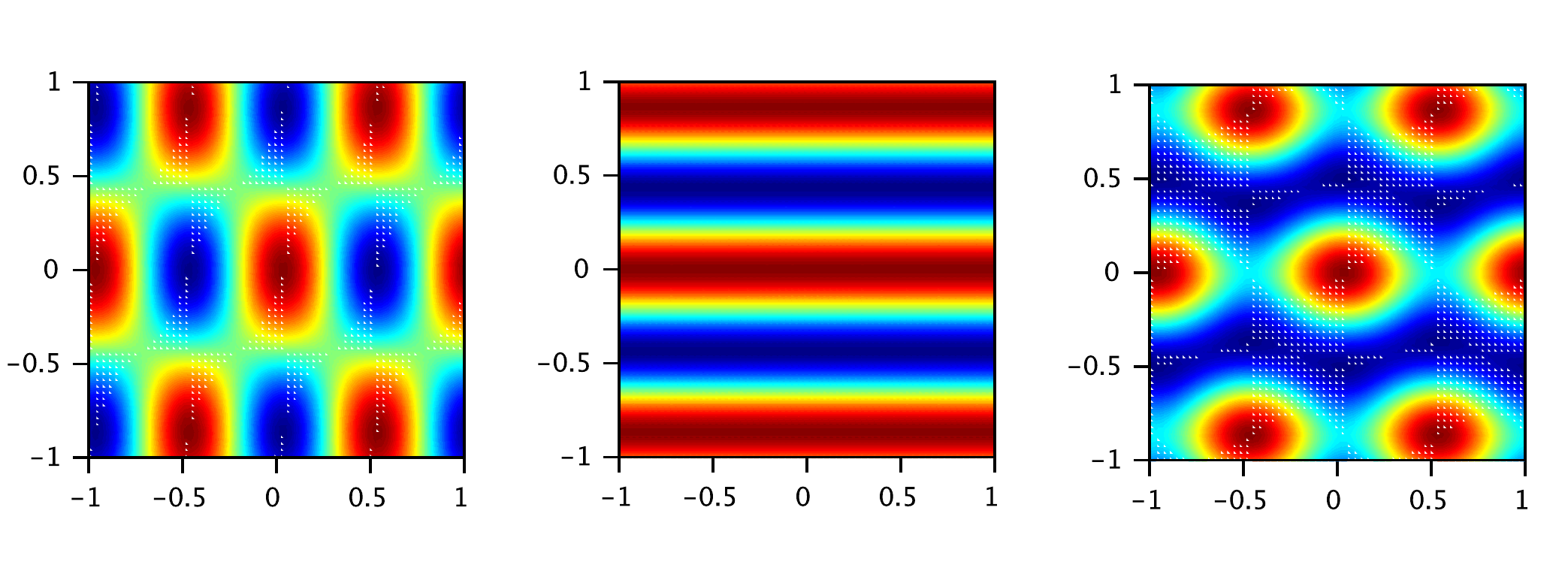}\\
\caption{Level sets for  projections of functions with reflection symmetry and periodic under the  triclinic lattice  with symmetry $\LL_S$ (case I), for $y_0=\frac{\sqrt{2}}{14}$.
Left to right:  level sets for the projections of $f_{\ell_3}=\omega_{\ell_3}+\omega_{\gamma_y\ell_3}$, 
of $f_{\ell_2}=\omega_{\ell_2}+\omega_{\gamma_y\ell_2}$, and of $f_{\ell_2}+f_{\ell_3}$.
The patterns are symmetric under reflection on the horizontal axis, but not under rotation by $\frac{2\pi}{6}$.
}
\label{FigTriclinicSymmetric}
\end{figure}

The projections of triclinic lattices above are good illustrations of the fact that the symmetries of the lattice of periods are not necessarily symmetries of the pattern. Projected patterns with period lattice $\hex$ and with $D_6$-symmetry are exhibited in \cite{CLO} together with a general method for finding them.

\section*{Acknowledgements}
{\small
CMUP (UID/MAT/00144/2013) iis funded by FCT (Portugal) with national (MEC) and European structural funds (FEDER), under the partnership agreement PT2020. 
%
%supported by the European Regional Development Fund through the programme COMPETE and by the Portuguese Government through the Funda\c{c}\~ao para a Ci\^encia e a Tecnologia (FCT) under the project PEst-C/MAT/UI0144/2011.
%We would like to thank Marjorie Senechal for many helpful comments and suggestions, and Jos\'e Basto-Gon\c calves for comments.
%
%Both authors had financial support from
%  Funda\c c\~ao para
%a Ci\^encia e a Tecnologia (FCT), Portugal,
%  through the programs
%POCTI and POSI of Quadro Comunit\'ario de Apoio III
%(2000--2006) with European Union funding (FEDER) and national funding.
E. M. Pinho was partly supported by  the grant
SFRH/BD/13334/2003 of FCT and by UBI-Universidade
da Beira Interior, Portugal.}

%\input{Abibliografia}
%5/viii/15 isabel
%2/viii/17 isabel

\renewcommand{\bibname}{References}

\end{document}